\renewcommand{\deg}{{\rm degree}}
\newcommand{\N}{\mathbb N}
\newcommand{\R}{\mathbb R}
\newcommand{\C}{\mathbb C}
\theoremstyle{plain}
\newtheorem*{theorem*}{Theorem}
\newtheorem{theorem}{Theorem}[section]
\newtheorem{corollary}[theorem]{Corollary}
\newtheorem{lemma}[theorem]{Lemma}
\newtheorem{proposition}[theorem]{Proposition}
\theoremstyle{definition}
\newtheorem*{definition*}{Definition}
\newtheorem{definition}[theorem]{Definition}
\theoremstyle{remark}
\newtheorem{remark}[theorem]{Remark}
\newtheorem{example}[theorem]{Example}
\begin{document}

\onehalfspace

\title{Tracial algebras and an embedding theorem}

\author{Tim Netzer}
\address{Tim Netzer, Universit\"at Leipzig, Germany}
\email{netzer@math.uni-leipzig.de}

\author{Andreas Thom}
\address{Andreas Thom, Universit\"at Leipzig, Germany}
\email{thom@math.uni-leipzig.de}

\begin{abstract} We prove that every positive trace on a countably generated $*$-algebra can be approximated by positive traces on algebras of generic matrices. This implies that every countably generated tracial $*$-algebra can be embedded into a metric ultraproduct of generic matrix algebras. As a particular consequence, every finite von Neumann algebra with separable pre-dual can be embedded into an ultraproduct of tracial $*$-algebras, which as $*$-algebras embed into a matrix-ring over a commutative algebra.
\end{abstract}

\maketitle

\tableofcontents

\section*{Introduction}

The theory of \emph{Rings of operators} was founded by F.\ Murray and J.\ von Neumann in the first half of the last century, see \cites{mvn1,mvn2,mvn3}. Later, the term \emph{von Neumann algebra} was coined to emphasize the seminal contributions of John von Neumann. We will freely use standard results in the theory of von Neumann algebras. For those results and all necessary definitions we refer to \cite{tak2} and the references therein. An early achievement of Murray and von Neumann was a classification of von Neumann algebras into types. In this note we are concerned with \emph{finite} von Neumann algebras with separable pre-dual, which decompose as products of algebras of the types I$_n$ for $1 \leq n < \infty$ and type II$_1$. The classification of algebras of type I$_n$ is complete as they are all isomorphic to algebras $L^{\infty}(X,\C) \otimes M_n \C$ for some probability measure space $(X,\mu)$. Hence, the key objects of study are II$_1$-factors with separable pre-dual, i.e. infinite-dimensional and weakly closed $*$-subalgebras of $B(H)$, which carry a faithful trace, have trivial center and admit a countable weakly dense subset. There are various constructions of II$_1$-factors, e.g. from groups or group actions, but also via internal constructions like free products, amplification etc. An important question right from the beginning was to which extend general II$_1$-factors are close to matrix algebras. A main result in the work of Murray and von Neumann was the construction of the hyperfinite II$_1$-factor $R$ and the proof of its uniqueness with respect to some \emph{strong} form of approximation with matrices. Murray and von Neumann also gave examples of II$_1$-factors, i.e. free group factors, which were not hyperfinite.

The embedding conjecture of Alain Connes states that every type II$_1$-factor with separable pre-dual can be embedded into an ultraproduct of the hyperfinite II$_1$-factor.
This assertion is equivalent some \emph{weak} form of approximation by matrices which ought to hold always.
The conjecture dates back to Connes' seminal work on injective von Neumann algebras,
\cite[p.\ 105]{connes1}. Although it is well-known that many II$_1$-factors, including free group factors, do embed into an ultraproduct of the hyperfinite II$_1$-factor, this conjecture remains open and has triggered a lot of interesting and deepgoing research.
There are various ways of reformulating the Connes embedding problem, and one way of putting it is to ask for an embedding into an metric ultraproduct of a sequence of finite von Neumann algebras of type I. We will prove that such an embedding always exists if one allows to approximate with a more general class of tracial $*$-algebras of type I.

Motivated by the work of D.\ Hadwin, see \cite{hadwin}, F.\ R\u adulescu established a relationship between the Connes embedding conjecture and some analogue of Hilbert's 17th problem on positive polynomials, see \cite{radulescu}. 
This approach was carried further in a more algebraic 
setup by I.\ Klep and M.\ Schweighofer, see \cite{klepsch}. In this note we want to follow this line of approach, and instead of giving yet another reformulation of the original problem we will obtain an affirmative answer to a different but analogous problem. Indeed, we will enlarge the realm and consider general $*$-algebras with a positive, faithful and unital trace. More precisely:

\begin{definition*}
A tracial $*$-algebra is a unital complex algebra $A$ with involution and a complex-linear functional $\tau \colon A \to \C$ such that:
\begin{enumerate}
\item $\tau(1) = 1$,
\item $\tau(a^*a)\geq 0$ for all $a\in A$, and
\item $\tau(ab) = \tau(ba)$, for all $a,b \in A$.
\end{enumerate}
\end{definition*}

$\tau$ is called a (positive) trace.
Note that $\tau(a^*)=\overline{\tau(a)}$ for all $a\in A$ is an automatic consequence, since $\tau\left((1+a^*)(1+a)\right)\geq 0 \mbox{ and }  \tau\left((1-ia^*)(1+ia)\right)\geq 0.$

In the context of this definition, the trace $\tau$ is said to be \emph{faithful} if $\tau(a^*a)=0$ holds only if $a=0$; the tracial $*$-algebra $(A,\tau)$ is called \emph{trace-reduced} then. We say that $a \in A$ is \emph{$\tau$-bounded}, if $\tau((a^*a)^p) \leq C^{2p}$ for some constant $C>0$, and all $p\in\N$. Moreover, $(A,\tau)$ is called \emph{bounded} if every element in $A$ is $\tau$-bounded. It is clear that every finite von Neumann algebra with a specified trace gives rise to a bounded and trace-reduced tracial $*$-algebra. Conversely, the GNS-construction allows to construct a trace-preserving embedding of any bounded and trace-reduced tracial $*$-algebra into a finite von Neumann algebra (this is for example demonstrated in \cite{pa}). The lack of bounded elements in a general tracial $*$-algebra causes many pathologies and gives rise to phenomena that makes the study of general tracial $*$-algebras quite different compared to the study of finite von Neumann algebras.

However, in this more general setup, we may still talk about algebras of various types. Indeed, a finite von Neumann algebra is a sum of algebras of the form $L^{\infty}(X_k,\C) \otimes_{\C} M_k \C$ for $k \leq n$ if and only if it satisfies a certain universal polynomial identity, similar to the commutator relation which characterizes type I$_1$.  Hence, we will say that a tracial $*$-algebra is of type I$_{\leq n}$ if it satisfies this identity (Definition \ref{deftyp}). Equivalently, we could say that a tracial $*$-algebra is of type I$_{\leq n}$ if and only if it embeds (not necessarily preserving the unit) into a ring of $n\times n$-matrices over a commutative $\C$-algebra. Algebras which satisfy a polynomial identity are called PI-algebras and have been studied in detail over the last decades. We will recall various results in Section \ref{polid} and refer for the general theory to \cite{row}.

We are going to define a suitable notion of metric ultraproduct for arbitrary tracial $*$-algebras -- which contains the usual ultraproduct of von Neumann algebras -- and prove a general embedding theorem. Indeed, the main application of our results is the following theorem (Corollary \ref{coroneumann} combined with Definition \ref{deftyp} and Corollary \ref{embed}):

\begin{theorem*}
Let $(M,\tau)$ be a finite von Neumann algebra with separable pre-dual. Then there exists a sequence $(A_n,\tau_n)$ of trace-reduced tracial $*$-algebras such that 
\begin{enumerate}
\item For every $n \in \N$, the complex algebra $A_n$ embeds as a complex algebra into the ring of $n\times n$-matrices over a commutative complex algebra, and
\item there exists a trace-preserving embedding
$$\iota \colon (M,\tau) \hookrightarrow \prod_{n \to \omega} (A_n,\tau_n).$$
\end{enumerate}
\end{theorem*}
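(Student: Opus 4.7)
The plan is to reduce the statement to the embedding result advertised in the abstract (Corollary~\ref{embed}) by passing to a weakly dense countable $*$-subalgebra, and then to promote the embedding of that subalgebra to one of the whole von Neumann algebra by an approximation argument in the tracial $2$-norm.

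First I would use separability of the pre-dual: the unit ball of $M$ is strong-operator metrizable and separable, so I can choose a countable family of contractions $\{m_i\}_{i\in\N}$ which is strongly dense in $(M)_1$, and take $B\subset M$ to be the $*$-subalgebra they generate (over $\Q[i]$, say). Then $(B,\tau|_B)$ is a countably generated, trace-reduced, bounded tracial $*$-algebra, and $B$ is strongly dense in $M$. Applying Corollary~\ref{embed} to $(B,\tau|_B)$ yields trace-preserving $*$-algebra embeddings $\iota_0 \colon (B,\tau|_B) \hookrightarrow \prod_{n\to\omega}(A_n,\tau_n)$, where each $A_n$ is an algebra of generic $n\times n$ matrices and thus, by Definition~\ref{deftyp}, of type $\mathrm{I}_{\leq n}$ — in particular each $A_n$ embeds as a complex algebra into $n\times n$ matrices over a commutative $\C$-algebra, giving condition~(1).

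The remaining task is to extend $\iota_0$ from $B$ to $M$ while preserving the trace and keeping the map injective. For this I would exploit the fact, recorded in the introduction, that the metric ultraproduct in question contains the usual tracial ultraproduct of finite von Neumann algebras. Concretely, the bounded-and-trace-reduced quotient of $\prod_{n\to\omega}(A_n,\tau_n)$ carries a natural structure of finite von Neumann algebra via GNS (as reviewed in the excerpt, following \cite{pa}), so the image $\iota_0(B)$ sits inside a finite von Neumann algebra $N$ equipped with a faithful normal trace extending $\tau_n$. Now, given $m\in M$ with $\|m\|\leq 1$, Kaplansky density produces a net (and, by separability, a sequence) $b_k\in B$ with $\|b_k\|\le 1$ converging to $m$ in the $2$-norm $\|\cdot\|_{2,\tau}$. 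Since $\iota_0$ is trace-preserving and the $b_k$ are uniformly bounded in operator norm, the $\iota_0(b_k)$ are uniformly bounded in $N$ and form a Cauchy sequence for the $2$-norm; the SOT-limit in $N$ defines $\iota(m)$, and standard arguments show $\iota$ is a $*$-homomorphism, is injective by faithfulness of the trace, and preserves $\tau$.

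The main obstacle is the last paragraph: one must know that the bounded part of the metric ultraproduct really behaves like a tracial von Neumann algebra, so that Kaplansky density and the SOT-extension argument make sense. This reduces to verifying that the metric ultraproduct is complete and that the GNS correspondence between bounded trace-reduced tracial $*$-algebras and finite von Neumann algebras commutes, in the appropriate sense, with metric ultraproducts — both of which are foundational facts that the paper develops before stating the theorem. Once those are in place, the reduction to Corollary~\ref{embed} applied to $(B,\tau|_B)$ and the extension via $2$-norm density of $B$ in $M$ complete the proof.
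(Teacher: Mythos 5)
Your overall skeleton is the paper's: pass to a countably generated, bounded, weakly dense $*$-subalgebra $B\subset M$, apply the countably generated embedding theorem to $(B,\tau|_B)$ (this is Theorem~\ref{mainc}/Corollary~\ref{coroneumann}, not Corollary~\ref{embed} -- the latter is the PI-theoretic fact that a \emph{trace-reduced} algebra satisfying $j_k$ sits inside $M_k$ of a commutative ring, and it is what you need for condition~(1) \emph{after} replacing each $A_{n}$ by its trace-reduction), and then extend from $B$ to $M$ by a density argument. The paper uses Pedersen's theorem where you use Kaplansky density; both produce, for $x\in M$, a sequence $y_k$ with $\|y_k\|\le\|x\|$ and $\|y_k-x\|_2\to 0$, and play the same role.

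The gap is in your extension step. You define $\iota(m)$ as an SOT-limit inside the finite von Neumann algebra $N$ obtained by applying GNS to the bounded part of $\prod_{n\to\omega}(A_n,\tau_n)$. But the theorem demands an embedding into the metric ultraproduct itself, and you never show that this limit lies there: the "foundational fact" you invoke -- that the GNS correspondence between bounded trace-reduced tracial $*$-algebras and finite von Neumann algebras commutes with metric ultraproducts -- is not established in the paper, and the paper explicitly warns that metric ultraproducts of tracial $*$-algebras behave very differently from tracial ultraproducts of von Neumann algebras. Fortunately the detour through $N$ is unnecessary. The metric ultraproduct is complete for the topology generated by \emph{all} $p$-norms (the Lemma following Remark~\ref{funkt}), and your Kaplansky sequence $(b_k)$ is Cauchy not just in $\|\cdot\|_2$ but in every $\|\cdot\|_p$, because in $M$ one has $\|x\|_p^p=\tau(|x|^p)\le\|x\|^{p-2}\|x\|_2^2$, so uniform operator-norm bounds upgrade $2$-norm convergence to $p$-norm convergence for every even $p$. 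Hence $m$ lies in the $p$-norm completion of $B$, and since $\iota_0$ is isometric for every $\|\cdot\|_p$ and the target is complete, $\iota_0$ extends continuously to a trace-preserving $*$-homomorphism on that completion, in particular on $M$; injectivity then follows from faithfulness of $\tau$ as you say. With that replacement your proof closes and coincides with the paper's.
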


Our results (Theorems \ref{main} and \ref{mainc}) cover more generally all countably generated trace-reduced tracial $*$-algebras, but the case of finite von Neumann algebras is certainly the most interesting one.

\section{Preliminaries} \label{prel}
\subsection{Convex geometry}
We first recall some basic concepts and results from convex geometry. See for example \cite{barvinok} for details and proofs.

\begin{definition} Let $V$ be a real vector space.
A subset $C \subset V$ is said to be a \emph{convex cone}, if
$$\quad C+C \subset C, \quad \mbox{and} \quad \R_{\geq 0} \cdot C \subset C.$$
\end{definition}

A linear functional $\phi \colon V \to \R$ is said to be positive with respect to the cone $C$, if $\phi(c) \geq 0$ for all $c \in C$. We will frequently apply this to the situation where $C$ contains a linear subspace $L$, where we get $L\subseteq C \cap (-C) \subset \ker \phi$.



For any subset  $C\subseteq V$ we denote by $C^\vee$ its \emph{dual}, which is by definition:
$$C^\vee = \{\phi\colon V \to \R \mbox{ linear }\mid \phi(c) \geq 0, \forall c \in C\}.$$ The double dual of $C$ (in $V$) is $$C^{\vee\vee} = \{ v\in V\mid \phi(v)\geq 0, \forall \phi\in C^\vee\}.$$
A classical result following from the Hahn-Banach Theorem is
$$C^{\vee\vee} = {\rm \overline{conv}}(C),$$
i.e.\ the double dual is nothing but the closed convex hull of $C$, in the finest locally convex topology on $V$.
Another useful property is that
$$(A+B)^\vee = A^\vee \cap B^\vee.$$

We record some facts about the finest locally convex topology. It can be defined as the coarsest vector space topology making all seminorms continuous.  All subspaces are then closed and inherit again the finest locally convex topology. So every finite dimensional subspace inherits the euclidean topology. All linear mappings into a vector space with any locally convex topology are continuous.
The following result is Proposition 1 in \cite{bisgaard}:

\begin{lemma} \label{technical}
A subset $B \subset  V$  in a countable dimensional real vector space is closed with respect to the finest locally convex topology if and only if
$B \cap W $ is closed for all finite dimensional subspaces $W$ of $V$.
\end{lemma}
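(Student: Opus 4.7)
The forward direction is immediate: every finite-dimensional subspace $W$ of $V$ inherits the (unique) Euclidean topology from the finest locally convex topology on $V$, and the intersection of a closed set with a subspace is closed in the subspace topology. For the reverse direction, my plan is to show that $V\setminus B$ is open by producing a neighborhood of each point $v_0\in V\setminus B$ contained in $V\setminus B$. Translating by $-v_0$ (which preserves the hypothesis, since any finite-dimensional affine subspace of $V$ sits inside a finite-dimensional linear one) reduces to the case $v_0=0$. The key reduction is the observation that any absolutely convex absorbing subset $U\subset V$ is automatically a neighborhood of $0$ in the finest locally convex topology: its Minkowski functional $p_U$ is a seminorm, so $\{x:p_U(x)<1\}\subset U$ is a basic open neighborhood of $0$. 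Thus it suffices to construct an absolutely convex absorbing set $U\subset V$ disjoint from $B$.

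To this end, I would fix a countable basis $(e_n)_{n\in\N}$ of $V$, set $W_n:=\mathrm{span}(e_1,\ldots,e_n)$, and equip each $W_n$ with the compatible Euclidean norm coming from this basis. The plan is then to choose positive reals $\epsilon_n>0$ inductively so that the closed coordinate box $K_n:=\{\sum_{i\leq n}x_ie_i:|x_i|\leq\epsilon_i\}$ is disjoint from $B$ for every $n$. The base case $n=1$ is immediate since $B\cap W_1$ is closed in $W_1\cong\R$ and does not contain $0$. For the inductive step, $K_n$ is compact in $W_{n+1}$ and $B\cap W_{n+1}$ is closed there; since $K_n\cap B=\emptyset$, their Euclidean distance $d>0$ is strictly positive, and setting $\epsilon_{n+1}:=d/2$ ensures that every point of $K_{n+1}=K_n+[-\epsilon_{n+1},\epsilon_{n+1}]e_{n+1}$ lies within Euclidean distance $\epsilon_{n+1}<d$ of $K_n$, hence outside $B\cap W_{n+1}$.

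Finally, I would set $U:=\{\sum x_ie_i\in V:|x_i|<\epsilon_i\text{ for all }i\}$. This $U$ is absolutely convex (it is a coordinate box), absorbing (every $x\in V$ has finite support, so $\max_i|x_i|/\epsilon_i<\infty$ and $x/t\in U$ for $t$ past this maximum), and disjoint from $B$ (any $x\in B$ lies in some $W_n$, and $U\cap W_n\subset K_n$ is disjoint from $B$). By the earlier observation, $U$ is the required neighborhood of $0$ disjoint from $B$. I do not anticipate a serious obstacle here: the one conceptual step is recognizing that producing a neighborhood in the finest locally convex topology reduces to building an absolutely convex absorbing set, after which everything is a routine compactness/separation argument in finite-dimensional Euclidean space.
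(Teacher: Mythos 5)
Your argument is correct and complete. Note that the paper itself gives no proof of this lemma at all --- it simply cites Proposition~1 of Bisgaard's paper --- so there is nothing to compare against line by line; what you have written is essentially the standard proof of that cited result. The two key points are exactly the ones you identify: (i) in the finest locally convex topology every absolutely convex absorbing set is a neighborhood of $0$, via its Minkowski functional; and (ii) countable dimension lets you build such a set disjoint from $B$ by an inductive ``fattening'' of coordinate boxes, using at each stage that the distance from a compact set to a disjoint closed set in a finite-dimensional space is positive. Your translation step is also handled correctly: the hypothesis is stable under translation because $(B-v_0)\cap W = \bigl(B\cap(W+v_0)\bigr)-v_0$ and the affine subspace $W+v_0$ sits inside the finite-dimensional linear subspace $W+\R v_0$. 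Two cosmetic remarks only: in the inductive step you should say what $\epsilon_{n+1}$ is when $B\cap W_{n+1}=\emptyset$ (take $\epsilon_{n+1}=1$, say, since then any choice works), and it is worth observing explicitly that countability of the dimension is used in an essential way --- Bisgaard's paper exists precisely because the statement fails in uncountable dimension, where the finitely-closed sets no longer form a locally convex (or even vector space) topology.
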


A somewhat subtle consequence, which we can derive is the following result:

\begin{proposition} \label{rie}
Let $V$ be a countable dimensional real vector space, equipped with the finest locally  convex topology, and let $F \subset V$ be 
a finite dimensional subspace, with a fixed norm. Let $B \subset V$ be a closed convex cone, and let $\varepsilon >0$ be arbitrary.
If $\phi \colon F \to \R$ is a linear functional which satisfies $\phi(b) \geq 0$, for all $b \in F \cap B$, then there exists a linear functional
$\psi \colon V \to \R$ with $\psi(b) \geq 0$, for all $b \in B$, and $\|\psi|_F- \phi\|\leq \varepsilon$.
\end{proposition}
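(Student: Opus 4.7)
The plan is to dualize and reduce the approximation statement to a bipolar identity. Set $K := F \cap B$, a closed convex cone in the finite-dimensional subspace $F$, and let $Q := \{\psi|_F : \psi \in B^\vee\} \subseteq F^*$, a convex cone in the finite-dimensional dual $F^*$. The hypothesis on $\phi$ says precisely that $\phi \in K^\vee$ (with duality taken inside $F$), while the conclusion is that $\phi$ lies in the closure of $Q$ with respect to the chosen norm on $F^*$. Since the inclusion $\overline{Q} \subseteq K^\vee$ is trivial (if $\psi \geq 0$ on $B$ then $\psi|_F \geq 0$ on $F\cap B$, and $K^\vee$ is closed), the proposition is equivalent to $K^\vee \subseteq \overline{Q}$.

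I would argue by contradiction: suppose some $\phi \in K^\vee$ satisfies $\phi \notin \overline{Q}$. Because $\overline{Q}$ is a closed convex cone in the finite-dimensional normed space $F^*$, the standard geometric Hahn--Banach theorem produces a continuous linear functional on $F^*$ strictly separating $\phi$ from $\overline{Q}$. Through the canonical identification $F^{**} \cong F$, this functional is evaluation at some $x \in F$, and a routine rescaling using that $\overline{Q}$ is a cone yields $x \in F$ with $\phi(x) < 0$ and $q(x) \geq 0$ for every $q \in \overline{Q}$.

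The inequality $q(x) \geq 0$ for all $q \in \overline{Q}$ unravels to $\psi(x) \geq 0$ for every $\psi \in B^\vee$, i.e.\ $x \in B^{\vee\vee}$. At this step I invoke the identity $C^{\vee\vee} = \overline{\mathrm{conv}}(C)$ recalled earlier in the excerpt, which holds because $V$ carries the finest locally convex topology. Since $B$ is a closed convex cone, $B^{\vee\vee} = B$, so $x \in B$. Combined with $x \in F$, this gives $x \in F \cap B = K$, contradicting $\phi(x) < 0$ together with $\phi \in K^\vee$. Hence $K^\vee \subseteq \overline{Q}$, which is precisely the $\varepsilon$-approximation asserted by the proposition.

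The only nontrivial ingredient is the bipolar identity $B^{\vee\vee} = B$, and this is exactly why the hypothesis equips $V$ with the finest locally convex topology: that topology makes every linear functional automatically continuous, so the purely algebraic bidual used in the excerpt's definition of $C^\vee$ coincides with the usual topological bipolar. The finite-dimensional separation step is routine, and the countable dimensionality of $V$ plays no role in this particular argument beyond supporting the ambient framework in which Lemma \ref{technical} and the bipolar identity were established.
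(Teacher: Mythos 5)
Your argument is correct, but it takes a genuinely different route from the paper's proof. The paper first settles the case $\dim V<\infty$ by identifying the dual cone of $F\cap B$ with $\overline{B^\vee+F^\vee}$, so that the affine plane $\phi+F^\vee$ of all extensions of $\phi$ can be pushed into $B^\vee+F^\vee$ by an arbitrarily small perturbation of $\phi$; the countable-dimensional case is then handled by an inductive telescoping construction along an exhaustion $F=V_0\subset V_1\subset\cdots$ with errors $\varepsilon/2^i$. You instead work in the ambient space in one step: you reformulate the claim as $K^\vee\subseteq\overline{Q}$ with $K=F\cap B$ and $Q$ the image of $B^\vee$ under restriction to $F$, separate in the finite-dimensional space $F^*$, and close the loop with the bipolar identity $B^{\vee\vee}=B$ --- which is exactly the classical consequence of Hahn--Banach recalled in the preliminaries and applies because $B$ is a closed convex cone and every linear functional on $V$ is continuous in the finest locally convex topology. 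What your version buys is the elimination of the induction (and indeed you never use that $\dim V$ is countable); what it costs is an appeal to separation in the infinite-dimensional space $V$, hidden inside $B^{\vee\vee}=B$, whereas the paper's infinite-dimensional step only needs pointwise convergence of a sequence of functionals. At bottom the two arguments perform the same duality computation, since in finite dimensions your $Q$ is the image of $B^\vee+F^\vee$ in $V^*/F^\vee\cong F^*$. The one step worth spelling out is the normalization of the separating functional: since $\overline{Q}$ is a cone containing $0$, strict separation of $\phi$ from $\overline{Q}$ can indeed be upgraded to $\ell\geq 0$ on $\overline{Q}$ together with $\ell(\phi)<0$, but this uses positive homogeneity and deserves a sentence rather than the phrase ``routine rescaling.''
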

\begin{proof} First assume that also $V$ is finite dimensional. Let $F \subset V$ have codimension $k$.
Let $V'$ be the dual space of $V$ and $F^\vee$ the $k$-dimensional linear space of functionals which vanish on $F$. Denote by $B^{\vee}$ the dual cone of $B$. The space of all extensions of $\phi$ is an affine $k$-plane of the form $\phi + F^\vee$. Our assumption on the positivity of $\phi$ translates simply into 
\begin{equation} \label{eqpos}
\phi + F^\vee \subset \overline{B^\vee + F^{\vee}}.
\end{equation}
Indeed the dual cone of $F \cap B$ is just the closure of $B^\vee + F^\vee$, and every linear extension of $\phi$ will be positive for the cone $F \cap B$. Note, at this point we are using that $B$ is a closed convex cone.

We see from (\ref{eqpos}) that every open neighborhood of $\phi + F^\vee$ has to intersect $B^\vee + F^\vee$. Hence we can perturb $\phi$ as little as we wish to some functional $\phi'$, and we can produce some element $\psi$ in the intersection of $\phi' + F^\vee$ and $B^\vee$, hence a positive extension of $\phi'$.

Now let $V$ be countable dimensional and let $(V_i)_{i\geq 1}$ be an increasing sequence of finite dimensional subspaces, with $V_0:=F\subseteq V_i$ for all $i\geq 1$, and $V=\bigcup_i V_i$. Equip the $V_i$ with compatible norms. Now choose inductively for each $i$ a linear functional $\psi_i\colon V_i\rightarrow \R$ with $\psi_i\geq 0$ on $B\cap V_i$ and $$\Vert \psi_i \vert_{V_{i-1}} -\psi_{i-1}\Vert \leq \frac{\varepsilon}{2^i}.$$ This can be done by the already proven result for finite dimension. Then for all $0\leq j< i$ $$\Vert\psi_i\vert_{V_{j}}-\psi_j\Vert \leq \sum_{k=j+1}^i \frac{\varepsilon}{2^i}.$$ This shows that the sequence $(\psi_i)_i$ converges pointwise on $V$ to some linear functional $\psi\colon V\rightarrow \R$, which is nonnegative on $B$ and fulfills $\Vert \psi\vert_F -\phi\Vert\leq\varepsilon$.
\end{proof}


\subsection{The algebra of non-commutative polynomials}

Throughout $X$ denotes the finite set of letters $X_1,\dots,X_n$. We denote by $\C\langle X \rangle$ the algebra of non-commutative polynomials in $X$ with
coefficients in $\C$, i.e. $\C\langle X\rangle$ consists of all $\C$-linear combinations of words $\omega$ in the letters $X$. A monomial is an expression $\lambda \omega$ with a word $\omega$ and $\lambda\in\C$.  Each monomial in $\C\langle X\rangle$ has a natural length which we call its \emph{degree}. In this way the algebra $\C\langle X\rangle$ becomes a graded algebra; the degree of a polynomial is the highest degree of a monomial occurring with a non-zero coefficient. 

$\C\langle X \rangle$ is equipped with an involution $P \mapsto P^{*}$ such that $X_i^*=X_i$ for all $i$ and $\lambda^*=\overline{\lambda}$ for $\lambda\in\C$. The set of hermitian elements $$\C \langle X\rangle^h:=\left\{ P\in \C\langle X\rangle\mid P^*=P\right\}$$ carries the structure of a real vector space. It is also a homogeneous subset of $\C\langle X\rangle$, i.e. a polynomial is hermitian if and only if all of its homogeneous parts are hermitian. 
We will always equip this space with the finest locally convex topology.

We denote by $\C\langle X\rangle _{\leq k}$ and $\C\langle X\rangle_{\leq k}^h$ the subspaces of elements of degree at most $k$.

\begin{definition} We denote by $Q(\C\langle X \rangle)$ the convex cone of sums of hermitian squares:
$$Q(\C\langle X \rangle) = \left\{ \sum_{i=1}^m P_i^* P_i \mid m \in \N, P_i \in \C\langle X \rangle \right\}.$$
Clearly, $Q(\C\langle X\rangle)\subseteq \C\langle X\rangle^h$.
\end{definition}

A linear functional is said to be \emph{positive} if it is positive with respect to the convex cone of hermitian squares.

\begin{definition}
Denote by $C_{\rm cyc} \subset \C\langle X \rangle$ the $\C$-subspace which is spanned by all 
linear commutators. Two elements $P_1,P_2 \in \C\langle X \rangle$ are said to be \emph{cyclically equivalent}
if their difference is in $C_{\rm cyc}$, i.e.\ it is a sum of commutators. Write $C_{\rm cyc}^h:=C_{\rm cyc}\cap \C\langle X\rangle^h.$
\end{definition}

\begin{remark} \label{hom}Note that $C_{\rm cyc}$ is a homogeneous subspace of $\C\langle X\rangle$, i.e. the homogeneous parts of a sum of commutators are again sums of commutators.
So $C_{\rm cyc}^h$ is also a homogeneous subspace of $\C\langle X\rangle^h.$
\end{remark}


\begin{lemma} \label{ext}We have $C_{\rm cyc}^h + i\cdot C_{\rm cyc}^h=C_{\rm cyc}.$ Thus   $C_{\rm cyc}^h$ is the real vector space spanned by all hermitian commutators. 
\end{lemma}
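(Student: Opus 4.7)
The key preliminary observation is that $C_{\rm cyc}$ is invariant under the involution: one computes $[P,Q]^* = Q^*P^* - P^*Q^* = [Q^*, P^*] \in C_{\rm cyc}$, and complex conjugate-linearity of the involution extends this to $C_{\rm cyc}^* \subseteq C_{\rm cyc}$. Granted this, the first identity becomes routine. The inclusion $C_{\rm cyc}^h + i C_{\rm cyc}^h \subseteq C_{\rm cyc}$ is immediate, while for $z \in C_{\rm cyc}$ one writes $z = \frac12(z+z^*) + i \cdot \frac{1}{2i}(z - z^*)$, with both summands hermitian and both lying in $C_{\rm cyc}$ by involution-stability, hence in $C_{\rm cyc}^h$.

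For the second assertion, the main step is to express every commutator as a $\C$-linear combination of hermitian commutators. Writing $P = P_1 + iP_2$ and $Q = Q_1 + iQ_2$ with hermitian $P_j, Q_j$, a direct expansion yields
\begin{equation*}
[P,Q] = \bigl([P_1,Q_1] - [P_2,Q_2]\bigr) + i\bigl([P_1,Q_2] + [P_2,Q_1]\bigr),
\end{equation*}
and each elementary bracket satisfies $[P_j, Q_k] = -i[iP_j, Q_k]$. For hermitian $H, K$ one checks $[iH, K]^* = [K, -iH] = i[H,K] = [iH, K]$, so each $[iP_j, Q_k]$ is a hermitian commutator. Consequently every element of $C_{\rm cyc}$ is a $\C$-linear combination $\sum_j \mu_j h_j$ of hermitian commutators $h_j$.

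Finally, given $z = \sum_j \mu_j h_j \in C_{\rm cyc}^h$, taking the adjoint gives $z^* = \sum_j \overline{\mu_j} h_j$ (since each $h_j$ is hermitian), and averaging $z$ with $z^* = z$ yields $z = \sum_j \mathrm{Re}(\mu_j) h_j$, a real linear combination of hermitian commutators. The whole argument amounts to bookkeeping in an algebra with involution, with the only minor subtlety being to track which of the brackets one encounters are hermitian and which are anti-hermitian; there is no substantive obstacle.
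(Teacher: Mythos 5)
Your proof is correct. It differs from the paper's in how the work is distributed between the two claims. The paper proves everything in one stroke: it expands $PQ-QP$ for $P=a+ib$, $Q=c+id$ with $a,b,c,d$ hermitian and regroups the result as a sum of hermitian commutators plus $i$ times a sum of hermitian commutators (the regrouping rests on exactly your observation that $i[H,K]$ is a hermitian commutator for hermitian $H,K$); both assertions of the lemma are then read off from this single decomposition. You instead get the first identity ``softly'' from the $*$-stability of $C_{\rm cyc}$ together with the decomposition $z=\tfrac12(z+z^*)+i\cdot\tfrac1{2i}(z-z^*)$ --- no commutator expansion needed --- and reserve the expansion of $[P_1+iP_2,Q_1+iQ_2]$ for the second claim, finishing with the averaging step $z=\sum_j\mathrm{Re}(\mu_j)h_j$ to pass from complex to real coefficients. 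Your route makes the logical dependence clearer: the paper's remark that the second claim ``follows immediately from the first'' really uses the explicit decomposition produced in its proof (the bare statement $C_{\rm cyc}^h+iC_{\rm cyc}^h=C_{\rm cyc}$ does not by itself identify $C_{\rm cyc}^h$ with the real span of hermitian commutators), and your version spells out precisely the extra input required. The price is a slightly longer argument; the paper's version is more compact but leaves that inference implicit.
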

\begin{proof} Let $P,Q\in A:=\C\langle X\rangle$. Write $P=a+ib, Q=c+id$ with $a,b,c,d\in A^h$. Then \begin{align*} PQ-QP & = (a+ib)(c+id) - (c+id)(a+ib)\\ &= ac - ca + db-bd + i(bc-cb) + i(ad-da)\\ &=  i(bc-cb) + i(ad-da) +i ( -i(ac - ca) - i(db-bd)).  \end{align*} In the last line, all the elements $i(bc-cb),i(ad-da), -i(ac-ca)$ and $-i(db-bd)$ are hermitian commutators. This establishes the first claim. The second claim follows immediately from the first.
\end{proof}

\begin{remark} \label{commut}
Note that the intersection between between $Q(\C\langle X \rangle)$ and $C_{\rm cyc}$
consists only of $0$. There are many ways to see this; the most intuitive way is to say that identities in $\C\langle X \rangle$ can be checked by checking that they hold whenever one specifies to self-adjoint matrices. This is justified in Theorem \ref{infinite}. In matrices, a commutator has always vanishing trace whereas a sum of squares with vanishing trace is zero.
\end{remark}

The following lemma is usually attributed to C.\ Caratheodory, but has been rediscovered for example in the work of K.\ Schm\"udgen, see page 325 of
\cite{schm2}.

\begin{lemma} \label{cara}
Let $P \in \C\langle X \rangle_{\leq m} \cap Q(\C\langle X \rangle)$. Then, there exist elements $P_1,\dots,P_{(n+1)^m} \in \C\langle X \rangle_{\leq m/2}$,
such that
$$P= \sum_{i=1}^{(n+1)^m} P_i^*P_i.$$
\end{lemma}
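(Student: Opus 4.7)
The plan is to combine the Gram matrix method with spectral decomposition. Starting from any sum-of-squares representation $P = \sum_{j=1}^N Q_j^* Q_j$, I would first reduce to the case where every $Q_j$ has degree at most $m/2$, then repackage the sum as $W^* M W$, where $W$ is the column vector of words of length $\leq m/2$ and $M$ is a Hermitian positive semidefinite matrix, and finally diagonalize $M$ so that its rank bounds the number of squared summands.

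For the degree-reduction step, let $d := \max_j \deg Q_j$ and assume for contradiction that $d > m/2$. The homogeneous component of degree $2d$ in $P$ must vanish since $2d > m$, yet it equals $\sum_{j:\deg Q_j = d} (Q_j')^* Q_j'$, where $Q_j'$ denotes the degree-$d$ homogeneous part of $Q_j$. The crucial observation is that every word of length $2d$ factors uniquely as the reverse of its first $d$ letters times its last $d$ letters, so the products $w^* w'$ with $|w|=|w'|=d$ are pairwise distinct words of length $2d$, hence linearly independent in $\C\langle X\rangle$. Expanding $Q_j' = \sum_w a_{j,w} w$, the vanishing condition becomes $A^*A = 0$ for the coefficient matrix $A = (a_{j,w})_{j,w}$, forcing $A = 0$ and contradicting $\deg Q_j = d$.

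Once all $Q_j$ have degree $\leq m/2$, enumerate the words of length $\leq m/2$ as a column $W$ of length $D = 1 + n + \cdots + n^{\lfloor m/2\rfloor}$ and expand $Q_j = a_j^T W$ with $a_j \in \C^D$. Then
\begin{equation*}
P = W^* M W \quad\text{with}\quad M := \sum_j \overline{a_j}\, a_j^T,
\end{equation*}
a Hermitian PSD matrix of size $D$. The spectral theorem gives a decomposition $M = \sum_{k=1}^r \lambda_k v_k v_k^*$ with $r = \mathrm{rank}(M) \leq D$ and $\lambda_k \geq 0$; substituting back and using $(v_k^* W)^* = W^* v_k$ yields
\begin{equation*}
P = \sum_{k=1}^r R_k^* R_k, \qquad R_k := \sqrt{\lambda_k}\, v_k^* W \in \C\langle X\rangle_{\leq m/2}.
\end{equation*}
A routine estimate shows $D \leq (n+1)^m$, and padding with zero summands gives the claimed bound.

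The main conceptual obstacle is the degree-reduction step: one cannot simply truncate a given sum-of-squares representation, and in the non-commutative setting the argument hinges on unique factorisation of a word of length $2d$ as $w^* w'$ with $|w|=|w'|=d$. Everything after that is standard linear algebra.
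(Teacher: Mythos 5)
Your argument is correct. Note that the paper itself gives no proof of this lemma: it is quoted from the literature (attributed to Carath\'eodory, with a reference to p.\ 325 of Schm\"udgen's book), so there is nothing to compare line by line; your write-up is essentially the standard Gram-matrix proof of that quoted result. The two points that actually need care are both handled properly: the degree-reduction step rests on the fact that the words $w^*w'$ with $|w|=|w'|=d$ are pairwise distinct (by unique factorisation of a word of length $2d$ into its first and last $d$ letters) and hence linearly independent, so the vanishing of the degree-$2d$ part forces $A^*A=0$ and thus $A=0$; and the count $D=1+n+\cdots+n^{\lfloor m/2\rfloor}\leq (n+1)^m$ is easily verified, e.g.\ by comparison with the binomial expansion of $(n+1)^m$. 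The spectral (or Cholesky) decomposition of the positive semidefinite Gram matrix then gives at most $D$ squares, which is exactly the Carath\'eodory-type bound in the statement.
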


K.\ Schm\"udgen has used this Lemma to show that $Q(\C\langle X\rangle)$ is closed, see Theorem $4.2$ in \cite{schm1}. We are going to generalize this result and we will see in Corollary \ref{by}  that also $Q(\C\langle X\rangle)+ C_{\rm cyc}^h$ is a closed convex cone in $\C\langle X\rangle^h.$ 
\begin{remark}\label{extr}Assume $\tau$ is a normalized $\R$-linear functional on $\C\langle X\rangle^h$ (normalized means $\tau(1)=1$), which is positive with respect to the convex cone $Q(\C\langle X\rangle)+ C_{\rm cyc}^h$.
Then the complex linear extension  on $\C\langle X\rangle$  is nothing but a positive trace.
Indeed, since $C_{\rm cyc}^h$ is a linear subspace on which $\tau$ is positive, it has to vanish on it. Hence the complex linear extension vanishes on $C_{\rm cyc}$, by Lemma \ref{ext}, and  is thus  a trace.
\end{remark}

We will later also be interested in the algebra generated by countably many non-commuting variables $X_1,X_2,\ldots$. We denote this algebra by $\C\langle X_{\infty} \rangle$. The involution and the grading extend naturally. We define all notions as $C_{\rm cyc, \infty}, C_{\rm cyc, \infty}^h,\ldots  $ in the obvious way for this bigger algebra. Note however that $\C\langle X_{\infty}\rangle_{\leq m}$ is not a finite dimensional space any more.


\subsection{Polynomial identities} \label{polid}

Recall that a complex algebra  $A$ \textit{satisfies a polynomial identity} if there is a nonzero polynomial $P\in\C\langle X_1,\ldots,X_n\rangle$, for some $n\in\N$, such that $P(a_1,\ldots,a_n)=0$ holds for all $a_1,\ldots,a_n\in A$.  Such an algebra $A$ is then called a \textit{PI-algebra}.  Note that if $A$ satisfies a polynomial identity, then it satisfies a polynomial identity in 2 variables already. Indeed, replacing $X_i$ by $U^iV$ in a polynomial identity for $A$  results in a nontrivial polynomial identity in the two variables $U,V$. 

Let us recall the following facts about PI-algebras. Finitely generated PI-algebras are Jacobson rings, i.e. every prime ideal is an intersection of maximal ideals (see \cite{ampro}, Corollary 1.3). The Jacobson radical of a finitely generated PI-algebra is nilpotent (see \cite{br}, Theorem B).
Every prime PI-ring is a subring of the $r\times r$-matrices, for some $r$, over a division ring which is finite dimensional over its center, and it has a two-sided ring of quotients which is all of the matrix ring. This is Posner's Theorem, see \cite{pos}. In particular, every prime PI-ring is a subring of the $r\times r$-matrices over some (commutative) field.

Denote by $J_k$ the two-sided $*$-ideal in $\C\langle X \rangle$ of all expressions which
are zero, whenever we specify the $X$ to self-adjoint complex $k\times k$-matrices. We let $J_k^h$ be the real vector space of self-adjoint elements in $J_k$. In the same way we define $J_{k,\infty}$ and $J_{k,\infty}^h$ in $\C\langle X_{\infty}\rangle$, but we will consider these sets only later.

\begin{proposition} \label{graded} Let $k \geq 1$. The ideal $J_k$ is homogeneous, i.e. if $P$ is in $J_k$, then
so is each of its homogenous parts.  So also $J_k^h$ is homogeneous in $\C\langle X\rangle^h$. Further, $J_k^h +i\cdot J_k^h=J_k$ holds.
\end{proposition}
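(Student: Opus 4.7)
My plan is to prove the three assertions in the order stated, with the homogeneity of $J_k$ being the main content and the other two facts following by formal arguments.

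For the homogeneity of $J_k$, the natural idea is a scaling (dehomogenization) argument. Let $P = P_0 + P_1 + \cdots + P_d \in J_k$ be the decomposition into homogeneous parts of degree $j$. Fix any tuple $A = (A_1,\ldots,A_n)$ of self-adjoint complex $k\times k$-matrices. For any \emph{real} $t$, the tuple $(tA_1,\ldots,tA_n)$ is again self-adjoint, so by assumption
\[
0 = P(tA_1,\ldots,tA_n) = \sum_{j=0}^d t^j\, P_j(A_1,\ldots,A_n).
\]
The right-hand side is a polynomial in the real variable $t$ taking values in the finite-dimensional space $M_k(\C)$. Since it vanishes identically on $\R$, each coefficient $P_j(A_1,\ldots,A_n) \in M_k(\C)$ must be zero. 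As $A$ was arbitrary self-adjoint, we conclude $P_j \in J_k$. The key (and really only) point is that restricting to real scalings preserves self-adjointness; using the full complex scaling would be illegal.

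The homogeneity of $J_k^h$ is then immediate: if $P \in J_k^h$, then each homogeneous part $P_j$ lies in $J_k$ by what we just proved, and from the discussion preceding the definition of $J_k$ we already know that the hermitian polynomials form a homogeneous subset of $\C\langle X\rangle$, so each $P_j$ is hermitian. Therefore each $P_j\in J_k^h$.

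Finally, for $J_k^h + i\cdot J_k^h = J_k$: the inclusion $\supseteq$ is obvious since $J_k^h \subseteq J_k$ and $J_k$ is a complex vector space. For $\subseteq$, let $P \in J_k$ and write
\[
P = \frac{P+P^*}{2} + i \cdot \frac{P - P^*}{2i}.
\]
Both summands (up to the factor $i$) are hermitian. Since $J_k$ is a two-sided $*$-ideal, $P^* \in J_k$, so $\tfrac{1}{2}(P+P^*)$ and $\tfrac{1}{2i}(P-P^*)$ both belong to $J_k^h$, and we are done. No real obstacle arises here; the whole proposition is a clean consequence of the scaling trick combined with $J_k$ being a $*$-ideal.
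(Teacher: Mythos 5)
Your proof is correct and follows essentially the same route as the paper: the paper scales the matrices by the single factor $2$ and iterates the resulting ``number operator'' to separate the homogeneous parts, while you scale by an arbitrary real $t$ and read off the coefficients of the resulting polynomial in $t$ --- the same dehomogenization idea, with the same crucial observation that real scalings preserve self-adjointness. The remaining two assertions are handled exactly as in the paper (hermitian parts of a hermitian polynomial, and the decomposition $P=\frac{P+P^*}{2}+i\,\frac{P-P^*}{2i}$ using that $J_k$ is a $*$-ideal).
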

\begin{proof}
We define the number operator $N \colon \C\langle X \rangle \to \C\langle X \rangle$ to be the linear extension of multiplication by $2^{\deg(w)}$ on a monomial $w$. Clearly,
$$P(2a_1,\dots,2a_n) = (NP)(a_1,\dots,a_n),$$
for self-adjoint matrices $a_1,\dots,a_n \in M_{k} \C$. Hence $J_k$ is closed under the number operator. It is easy to conclude from this that $J_k$ is indeed homogeneous. Then clearly $J_k^h$ is homogeneous in $\C\langle X\rangle^h$. Finally let $P\in J_k$. Write $P=P_1 +iP_2$ with $P_1,P_2\in \C\langle X\rangle^h$. It follows easily that $P_1,P_2\in J_k$, which completes the proof.
\end{proof}

\begin{proposition} \label{ideal}
Let $k \geq 1$. Let $P_1,\dots,P_m \in \C\langle X \rangle$. If
\begin{equation} \label{condi}
\sum_{i=1}^m P_i^*P_i \in J_k + C_{\rm cyc},
\end{equation}
then $P_i \in J_k$, for all $1 \leq i \leq m$.
\end{proposition}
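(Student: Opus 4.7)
The plan is to evaluate on arbitrary self-adjoint $k\times k$ matrix tuples and use the standard fact that a positive semidefinite matrix with zero trace must vanish.

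Fix any self-adjoint tuple $a = (a_1, \ldots, a_n)$ with each $a_j \in M_k \C$ self-adjoint, and let ${\rm ev}_a \colon \C\langle X\rangle \to M_k \C$ be the corresponding evaluation $*$-homomorphism. By the very definition of $J_k$, the ideal $J_k$ is mapped to $0$. Also, ${\rm ev}_a$ sends commutators to commutators in $M_k \C$, so ${\rm ev}_a(C_{\rm cyc})$ consists of matrices whose (unnormalized) trace $\operatorname{Tr}$ vanishes. Therefore, applying ${\rm ev}_a$ to the hypothesis (\ref{condi}) and then taking $\operatorname{Tr}$ yields
\begin{equation*}
\sum_{i=1}^m \operatorname{Tr}\bigl(P_i(a)^* P_i(a)\bigr) = 0.
\end{equation*}

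Now I would exploit positivity: each matrix $P_i(a)^*P_i(a)$ is positive semidefinite, hence has nonnegative trace, and the sum above is zero. Consequently $\operatorname{Tr}(P_i(a)^*P_i(a)) = 0$ for every $i$. But for a positive semidefinite matrix this trace equals the squared Hilbert–Schmidt norm of $P_i(a)$, so $P_i(a) = 0$ for every $i$.

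Since the self-adjoint tuple $a$ was arbitrary, each $P_i$ vanishes on every self-adjoint specialization into $M_k \C$, which is precisely the condition $P_i \in J_k$. The argument is essentially a Cauchy–Schwarz/positivity trick, and the only ingredient beyond elementary linear algebra is the observation that commutators have zero trace, so I do not anticipate any serious obstacle; the main point one has to be careful about is that the trace on $M_k\C$ annihilates the image of $C_{\rm cyc}$, which is immediate from linearity and $\operatorname{Tr}(XY) = \operatorname{Tr}(YX)$.
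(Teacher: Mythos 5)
Your argument is exactly the paper's proof: evaluate at an arbitrary self-adjoint tuple, apply the trace (which kills both $J_k$ and $C_{\rm cyc}$), and observe that the resulting sum of squared Hilbert--Schmidt norms being zero forces each $P_i(a)=0$, i.e.\ $P_i \in J_k$. The proposal is correct and matches the paper's approach step for step.
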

\begin{proof}
Condition \ref{condi}
says that $$\sum_{i=1}^m{\rm tr}(P_i(a_1,\dots,a_n)^*P_i(a_1,\dots,a_n))=0$$ for all self-adjoint $k \times k$-matrices $a_1,\dots,a_n$. Since this is just the sum of the squares of the Hilbert-Schmidt norms we can conclude that $$P_i(a_1,\dots,a_n)=0.$$ Hence, $P_i \in J_k$ as desired.
\end{proof}

A priori, there is a little ambiguity in the definition of $J_k$, since we could have taken all complex matrices instead of self-adjoint matrices. The following lemma shows that this makes to difference:

\begin{lemma} \label{complexreal}
Let $P \in J_k$ and $a_1,\dots,a_n$ be complex $k \times k$-matrices. Then,
$$P(a_1,\dots,a_n)=0.$$
\end{lemma}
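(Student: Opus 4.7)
The plan is to reduce the statement to the defining property of $J_k$ by writing every complex matrix as a complex linear combination of two self-adjoint matrices, and then use a polynomial identity argument in commuting variables to pass from the real parameters to complex ones.

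Concretely, given complex $k \times k$-matrices $a_1,\ldots,a_n$, I would first write each one in Cartesian form as $a_j = h_j + i k_j$ with $h_j = (a_j+a_j^*)/2$ and $k_j = (a_j-a_j^*)/(2i)$ both self-adjoint. Then, for commuting real variables $s_1,t_1,\ldots,s_n,t_n$, I would consider the matrix-valued expression
\[
Q(s_1,t_1,\ldots,s_n,t_n) := P(s_1 h_1 + t_1 k_1,\ldots, s_n h_n + t_n k_n).
\]
Expanding $P$ as a linear combination of words in the $X_j$ and distributing the substitution, one sees that $Q$ is a polynomial in the commuting variables $s_j,t_j$ (say with complex coefficients) taking values in $M_k\C$.

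Now, for any choice of \emph{real} values of the $s_j,t_j$ the matrices $s_j h_j + t_j k_j$ are self-adjoint, so $Q$ vanishes identically on $\R^{2n}$ by the hypothesis $P \in J_k$. Since $Q$ is a polynomial in commuting variables that vanishes on $\R^{2n}$, each of its coefficients (an element of $M_k\C$) must be zero, so $Q$ is the zero polynomial and may be evaluated at arbitrary complex arguments. Specializing $s_j = 1$ and $t_j = i$ gives $s_j h_j + t_j k_j = a_j$, whence $P(a_1,\ldots,a_n) = Q(1,i,\ldots,1,i) = 0$.

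There is no serious obstacle here; the only point requiring a little care is the bookkeeping that shows $Q$ really is an $M_k\C$-valued polynomial in the commuting scalars $s_j,t_j$ (so that the standard fact that a polynomial vanishing on $\R^{2n}$ is identically zero applies componentwise in the matrix entries), and that the $s_j,t_j$ may then be freely replaced by complex numbers.
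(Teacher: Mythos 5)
Your proof is correct, and it takes a genuinely different route from the paper's. The paper multilinearizes: it first reduces (via the number-operator trick of Proposition \ref{graded}) to $P$ homogeneous of some multi-degree, substitutes sums of fresh variables to produce an identity $Q$, extracts the multi-degree $(1,\dots,1)$ part $Q'$, recovers $P$ from $Q'$ by polarization, and then observes that a \emph{multilinear} identity valid on the self-adjoint matrices extends to all complex matrices by $\C$-linearity, since the self-adjoint matrices span $M_k\C$ over $\C$. You instead keep $P$ as it is, decompose each $a_j = h_j + ik_j$ into self-adjoint parts, and exploit the fact that the substitution $X_j \mapsto s_jh_j + t_jk_j$ turns $P$ into an $M_k\C$-valued polynomial $Q$ in the \emph{commuting} scalars $s_j,t_j$; since $Q$ vanishes on all of $\R^{2n}$ it is the zero polynomial (entrywise, over the infinite field $\R$), so it may be evaluated at $(1,i,\dots,1,i)$. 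This is a Zariski-density-type argument and is, if anything, shorter and more self-contained than the paper's; the one point you rightly flag --- that the formal expansion of $Q$ in the commuting scalars is legitimate and that evaluating the zero polynomial at complex arguments returns $P(a_1,\dots,a_n)$ --- is routine since scalars commute with matrices. What the paper's approach buys in exchange for the extra bookkeeping is the structurally useful by-product, standard in PI theory, that every identity of $M_k\C$ is a consequence of a multilinear one of the same degree; your argument does not produce that, but the lemma as stated does not need it.
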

\begin{proof}
We first claim that $P$ can be derived from a multi-linear relation with the same degree but in possibly more variables. Using a similar argument as in the proof of Proposition \ref{graded}, we may assume that $P$ is homogenous of multi-degree $(k_1,\dots,k_n)$. We set $m= \sum_{i=1}^n k_i$ and let:
$$Q(x_1,\dots,x_{m}) = P(x_1 + \cdots +x_{k_1},x_{k_1+1} + \cdots + x_{k_1 + k_2}, \cdots, x_{m - k_n+1} + \cdots + x_m ).$$
Then $Q=0$ is an identity that holds for all self-adjoint matrices of size $k$. Let $Q'$ be its homogenous part of multi-degree $(1,\dots,1)$, which is also an identity for all self-adjoint matrices of size $k$. Clearly,
$$P(x_1,\dots,x_n) = \frac1{k_1! k_2! \cdots k_n!} Q'(\underbrace{x_1,\dots,x_1}_{k_1},\underbrace{x_2,\dots,x_2}_{k_2}, \dots,\underbrace{x_n, \dots,x_n}_{k_n}),$$
and $Q'$ is multilinear. Just by linearity of $Q'$, the relation $Q'=0$ does also hold for complex matrices. Hence, $P=0$ holds for all complex matrices.
\end{proof}

\begin{remark}\label{eval}
We see from Lemma \ref{complexreal} that for $P\in J_k$ and $Q_1,\ldots,Q_n\in \C\langle X\rangle$, the polynomial $P(Q_1,\ldots,Q_n)$ again belongs to $J_k.$
\end{remark}

We define
\begin{equation*} \label{amitsur}
j_k = \sum_{\sigma \in S_{2k}} (-1)^{{\rm sg}(\sigma)} X_{\sigma(1)} X_{\sigma(2)} \cdots X_{\sigma(2k)}
\in\C\langle X_1,\dots,X_{2k} \rangle.
\end{equation*}

The most striking result about the ideal $J_k$ is a consequence of a Theorem of Amitsur and Levitzki, which can be found in \cite{amitlev1,amitlev2}.

\begin{theorem}[Amitsur-Levitzki] \label{infinite}
Any polynomial identity which holds in the ring of complex 
$k \times k$-matrices has degree $\geq 2k$. 
Moreover, every such polynomial identity of degree $2k$ is a scalar multiple of $j_k$.
\end{theorem}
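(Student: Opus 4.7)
My plan is to prove the two claims of the theorem separately, along with the uniqueness refinement at the end. The substantive content is (a) that $j_k$ is a polynomial identity of $M_k(\C)$, and (b) that no polynomial identity has degree less than $2k$; uniqueness in degree $2k$ then follows from a standard refinement.

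For (b), first reduce to the multilinear case via a polarization argument analogous to the proof of Proposition \ref{graded} and Lemma \ref{complexreal}: any polynomial identity of $M_k(\C)$ of total degree $d$ gives rise, after extracting a homogeneous part and polarizing in each variable, to a multilinear identity of the same degree $d$ in (possibly) more variables. So assume $p(X_1,\dots,X_d)=\sum_{\sigma\in S_d}c_\sigma X_{\sigma(1)}\cdots X_{\sigma(d)}$ is multilinear with $d<2k$, and after relabelling $c_{\mathrm{id}}\neq 0$. Then evaluate on the sequence of matrix units $E_{11},E_{12},E_{22},E_{23},\dots$ of length $d$, which traces a walk in the directed graph on $\{1,\dots,k\}$ with loops and forward edges. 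The degree constraints at the vertices force any valid ordering of these matrix units to start at vertex $1$ and proceed uniquely (at each intermediate vertex the loop must be used before the forward edge, or else the loop becomes stranded), so the identity permutation is the only $\sigma$ yielding a nonzero product. Hence $p$ evaluates on this tuple to $c_{\mathrm{id}}$ times a single matrix unit, contradicting that $p$ is an identity.

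For (a) I would use Rosset's Grassmann-algebra proof. Let $E$ be the exterior algebra on generators $e_1,\dots,e_{2k}$, decomposed into even and odd parts $E=E^+\oplus E^-$, and for $A_1,\dots,A_{2k}\in M_k(\C)$ form $B:=\sum_{i=1}^{2k}A_i\otimes e_i\in M_k(E)$. Since the $e_i$ are odd with $e_ie_j=-e_je_i$ and $e_i^2=0$, the entries of $B^2=\sum_{i<j}[A_i,A_j]\otimes e_ie_j$ lie in the commutative ring $E^+$, so Cayley--Hamilton applies to $B^2\in M_k(E^+)$. By Newton's identities the coefficients of the characteristic polynomial of $B^2$ are polynomials in the power sums $\mathrm{tr}(B^{2s})$ for $s=1,\dots,k$, and each such trace vanishes: in
\begin{equation*}
\mathrm{tr}(B^r)=\sum_{(i_1,\dots,i_r)\text{ distinct}}\mathrm{tr}(A_{i_1}\cdots A_{i_r})\,e_{i_1}\cdots e_{i_r}
\end{equation*}
(only distinct tuples survive since $e_j^2=0$), each cyclic orbit has length $r$, and combining cyclicity of $\mathrm{tr}$ with the sign $(-1)^{r-1}$ produced by shifting one generator past the remaining $r-1$ in $E$ yields a factor $\sum_{j=0}^{r-1}(-1)^{j(r-1)}$, which equals $0$ for $r$ even. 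Hence the characteristic polynomial of $B^2$ is $\lambda^k$, so $B^{2k}=(B^2)^k=0$. Expanding $B^{2k}$ directly produces $B^{2k}=j_k(A_1,\dots,A_{2k})\otimes e_1\cdots e_{2k}$, and since $e_1\cdots e_{2k}\neq 0$ in $E$, we conclude $j_k(A_1,\dots,A_{2k})=0$ in $M_k(\C)$.

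For the uniqueness statement, the space of multilinear identities of degree $2k$ is invariant under the $S_{2k}$-action $\tau\cdot p(X_1,\dots,X_{2k}):=p(X_{\tau(1)},\dots,X_{\tau(2k)})$ on the regular representation and contains the sign-isotypic line $\C\cdot j_k$; a classical combinatorial refinement (going back to Amitsur and Levitzki) using matrix-unit substitutions analogous to those in (b) rules out every other $S_{2k}$-isotype, so the space of degree-$2k$ identities is exactly this one-dimensional sign component. The main obstacle is the sign bookkeeping in Rosset's cancellation: getting the parities right both in the vanishing $\mathrm{tr}(B^{2s})=0$ and in identifying $B^{2k}$ with $j_k(A_1,\dots,A_{2k})\otimes e_1\cdots e_{2k}$ is where calculational errors most easily creep in, while on the lower-bound side the analogous care lies in verifying that the selected sequence of matrix units admits only the identity ordering as a nonzero product.
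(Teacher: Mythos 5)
The paper does not prove Theorem \ref{infinite}; it imports the Amitsur--Levitzki theorem wholesale from \cite{amitlev1,amitlev2}, so there is no internal argument to measure you against, and your attempt has to be judged on its own. For the two substantive claims your outline is correct and standard. The lower bound via multilinearization (the same polarization device the paper uses in Lemma \ref{complexreal} and Proposition \ref{graded}) followed by substitution of the staircase $E_{11},E_{12},E_{22},E_{23},\dots$ works: in the associated directed multigraph every Eulerian path must start at vertex $1$ and must traverse each loop before the outgoing forward edge (otherwise the loop is unreachable, the non-loop part of the graph being a path), so the identity ordering is the unique ordering with nonzero product. Rosset's argument is also correctly assembled: $B^2$ has entries in the commutative even part $E^+$; the power sums $\mathrm{tr}(B^{2s})$ vanish because distinctness of the surviving indices makes every cyclic orbit have length exactly $2s$ while the signs $(-1)^{j(2s-1)}=(-1)^j$ sum to zero; Newton's identities (usable since we are over $\Q$) then reduce the characteristic polynomial of $B^2$ to $\lambda^k$; and $B^{2k}=j_k(A_1,\dots,A_{2k})\otimes e_1\cdots e_{2k}$ with $e_1\cdots e_{2k}\neq 0$.

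The genuine gap is the uniqueness clause. You do not prove that every identity of degree $2k$ is a scalar multiple of $j_k$; you assert it with a pointer to ``a classical combinatorial refinement.'' Rosset's method gives no purchase on uniqueness, and the representation-theoretic framing you propose (ruling out every non-sign isotype of $S_{2k}$ inside the space of multilinear identities) is not a corollary of anything you have established --- it is essentially a restatement of what must be proved and requires its own substitution analysis. The classical argument takes a multilinear identity $\sum_\sigma c_\sigma X_{\sigma(1)}\cdots X_{\sigma(2k)}$ and, for suitable tuples of matrix units admitting exactly two nonzero orderings, forces relations $c_\sigma=-c_{\sigma\tau}$ that propagate to $c_\sigma=(-1)^{\mathrm{sg}(\sigma)}c_{\mathrm{id}}$; this is the delicate core of \cite{amitlev2} and is entirely missing here. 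You would also need the short bridging step from the multilinear case to the statement as the paper phrases it: a homogeneous degree-$2k$ identity with a repeated variable polarizes to a multiple of $j_k$ and hence restitutes to $0$ because $j_k$ is alternating, while lower-degree homogeneous components of a degree-$2k$ identity vanish by the first part of the theorem.
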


The first useful consequence of the Amitsur-Levitzki theorem to our set of 
questions is the following corollary, which will be crucial in
the proof of Theorem \ref{crucial}.

\begin{corollary} 
Any homogeneous part of an element in $J_k$ has degree $\geq 2k$.
\end{corollary}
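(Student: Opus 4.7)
The plan is to combine three ingredients already in the excerpt: the homogeneity of $J_k$ (Proposition \ref{graded}), the fact that elements of $J_k$ vanish on \emph{all} complex $k\times k$-matrices and not only on self-adjoint ones (Lemma \ref{complexreal}), and the Amitsur--Levitzki bound (Theorem \ref{infinite}).

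Concretely, let $P\in J_k$ and write $P = \sum_{d\geq 0} P_d$ as a sum of its homogeneous components, where $P_d$ is homogeneous of degree $d$. Fix a nonzero homogeneous part $P_d$. By Proposition \ref{graded}, the ideal $J_k$ is homogeneous, so $P_d\in J_k$; in particular, $P_d$ vanishes on all tuples of self-adjoint complex $k\times k$-matrices. First I would invoke Lemma \ref{complexreal} to upgrade this: $P_d(a_1,\dots,a_n)=0$ holds for \emph{arbitrary} complex $k\times k$-matrices $a_1,\dots,a_n$. Thus $P_d$ is a (nonzero) polynomial identity of the ring $M_k(\C)$.

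At this point Amitsur--Levitzki (Theorem \ref{infinite}) applies directly: every polynomial identity of $M_k(\C)$ has degree at least $2k$. Since $P_d$ is a nonzero such identity and is homogeneous of degree $d$, we conclude $d\geq 2k$, which is exactly the claim.

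There is essentially no obstacle here beyond correctly quoting the earlier results; the only subtlety worth a word is the passage from ``vanishing on self-adjoint matrices'' to ``vanishing on all matrices'', which is the content of Lemma \ref{complexreal} and is what lets us feed $P_d$ into Amitsur--Levitzki in its standard form (the latter is usually stated for identities of the full matrix algebra). One might also note that the zero polynomial is by convention excluded from the degree count, so the statement should be read as being about nonzero homogeneous parts, for which the argument above is immediate.
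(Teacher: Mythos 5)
Your proof is correct and follows exactly the paper's argument: extract a nonzero homogeneous part via the homogeneity of $J_k$ (Proposition \ref{graded}), upgrade it to an identity for all complex matrices via Lemma \ref{complexreal}, and apply the Amitsur--Levitzki degree bound (Theorem \ref{infinite}). No substantive difference from the paper's proof.
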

\begin{proof}
Since $J_k$ is homogeneous, any homogeneous part of an element from $J_k$ again belongs to $J_k$.  Using Lemma \ref{complexreal}, the corresponding relation holds for all complex matrices. By Theorem \ref{infinite}, no such element can exist of degree $< 2k$. This finishes the proof.
\end{proof}

Since $M_k(\C)$ satisfies the identity $j_k$, it satisfies also an identity in two variables. So all $J_k$ are nontrivial ideals in $\C\langle X\rangle$, for $n\geq 2$. The quotient algebra $A_{n,k}:=\C\langle X\rangle/J_k$ is a PI-algebra, fulfilling every identity from $J_k$. This can for example be seen from Remark \ref{eval}. We call this quotient the \textit{algebra of $n$ generic self-adjoint $k\times k$-matrices}. 

This notion can be justified as follows.  Let $\xi_{ij}^{(l)}$ be commuting variables, for $i,j=1\ldots k$ and $l=1,\ldots, n$. Let $C_{n,k}=\C[\xi_{ij}^{(k)}\mid i,j=1,\ldots,k, l=1,\ldots,n]$ be the commutative algebra generated by these variables, and extend the involution from $\C$ by defining  $(\xi_{ij}^{(k)})^*=\xi_{ji}^{(k)}$. Now let $M_k(C_{n,k})$ be the algebra of $k\times k$-matrices  over $C_{n,k}$, equipped with the canonical extended involution. For $l=1,\ldots,n$ write $Y_l= (\xi_{ij}^{(l)})_{i,j}$ and let $G_{n,k}$ be the subalgebra of $M_k(C_{n,k})$ generated by $Y_1,\ldots, Y_n$.  Now one checks that sending $X_i$ to $Y_i$ induces a well-defined $*$-algebra isomorphism between $A_{n,k}$ and $G_{n,k}$.

There is a large amount of literature on PI-algebras and especially  algebras of generic matrices, see for example \cite{jac, row}. Note however that algebras of generic matrices are often defined to be quotients with respect to the ideal $J_{k,\infty}$  of the polynomial algebra $\C\langle X_{\infty}\rangle$ in \textit{countably} many variables.  So we denote by $A_k$ the quotient $\C\langle X_{\infty}\rangle / J_{k,\infty}$ and call it the \textit{algebra of generic self-adjoint $k\times k$-matrices}.  Clearly each $A_{n,k}$ is a subalgebra of $A_k$.

The algebras $A_k$ are very nice structures. For example they are domains (see for example \cite{jac} Section II.4.), and by Posner's Theorem \cite{pos} they are subrings of matrices over a division ring that is finite dimensional over its center. The same is thus true for the algebras $A_{n,k}$.
We will use all these algebras below   to approximate an arbitrary finitely generated tracial $*$-algebra.

We end this section with a definition of \emph{type} in the realm of tracial $*$-algebras. We have argued in the Introduction that this extends the usual classification of finite von Neumann algebras into types.

\begin{definition} \label{deftyp}
A tracial $*$-algebra is said to be of \emph{type} I$_{\leq k}$ if it satisfies the polynomial identity 
\begin{equation*}
j_k = \sum_{\sigma \in S_{2k}} (-1)^{{\rm sg}(\sigma)} X_{\sigma(1)} X_{\sigma(2)} \cdots X_{\sigma(2k)}
\in\C\langle X_1,\dots,X_{2k} \rangle.
\end{equation*}
\end{definition}

\section{The main theorems} 

\label{secmain}

\subsection{Approximation of traces}

\label{secapp}

We will deduce our main results as a consequence of rather elementary results in convex geometry, for example Proposition \ref{rie}, and the main work is needed to show that those results  can be applied. Therefore, we will spend some time to show that certain cones in $\C\langle X \rangle$ under consideration are closed in the finest locally convex topology.

\begin{theorem} \label{endlich}
$Q(\C\langle X\rangle)+C_{\rm cyc}^h + J_k^h$ is a closed convex cone in $\C\langle X\rangle^h$.
\end{theorem}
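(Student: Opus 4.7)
The plan is to apply Lemma \ref{technical} and reduce to showing, for each $m\in\N$, that the intersection $C_m := (Q(\C\langle X\rangle)+C_{\rm cyc}^h + J_k^h)\cap\C\langle X\rangle_{\leq m}^h$ is closed in the finite-dimensional Euclidean space $\C\langle X\rangle_{\leq m}^h$. The argument then splits into a structural identification of $C_m$ in terms of \emph{degree-bounded} pieces, and a concrete closedness argument for the resulting sum.

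The structural claim is that
\[ C_m \;=\; Q_{m/2} \;+\; \bigl(C_{\rm cyc}^h \cap \C\langle X\rangle_{\leq m}^h\bigr) \;+\; \bigl(J_k^h \cap \C\langle X\rangle_{\leq m}^h\bigr), \]
where $Q_{m/2}:=\{\sum_i P_i^*P_i : P_i\in\C\langle X\rangle_{\leq m/2}\}$. The inclusion $\supseteq$ is clear; the inclusion $\subseteq$ is the main technical obstacle. Given $P=q+c+j\in C_m$ with $q=\sum_i P_i^*P_i$, set $D:=\max_i\deg P_i$. If $2D>m$, the top homogeneous part $q_{2D}=\sum_{\deg P_i=D}(P_i^{(D)})^*P_i^{(D)}$ is itself a hermitian sum of squares, and must equal $-c_{2D}-j_{2D}\in C_{\rm cyc}+J_k$ since $P$ has degree at most $m$. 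Proposition \ref{ideal} then forces every top component $P_i^{(D)}$ (for $\deg P_i=D$) into $J_k$. Replacing each such $P_i$ by $P_i-P_i^{(D)}$ gives a new sum-of-squares representation of strictly smaller degree, and the difference with $q$ lies in $J_k^h$ because $J_k$ is a two-sided $*$-ideal; this difference is absorbed into the $j$-summand. Iterating this reduction brings $\deg q\leq m$ after finitely many steps. Then $c+j=P-q$ has degree at most $m$, and by homogeneity of $C_{\rm cyc}^h$ (Remark \ref{hom}) and $J_k^h$ (Proposition \ref{graded}) the degree-$>m$ parts of $c$ and $j$ add up to zero and can be dropped. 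A final application of Lemma \ref{cara} puts $q$ into $Q_{m/2}$.

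To show that $C_m$ is closed, I would use a concrete positive trace on $\C\langle X\rangle$ adapted to $J_k$: pick any probability measure $\mu$ of full support on the space of $n$-tuples of self-adjoint complex $k\times k$-matrices (e.g.\ a standard Gaussian) and set $\tau(P):=\int{\rm tr}_k\bigl(P(A_1,\ldots,A_n)\bigr)\,d\mu$. This $\tau$ vanishes on $C_{\rm cyc}$ and on $J_k$, and the hermitian form $P\mapsto\tau(P^*P)$ is positive definite on the finite-dimensional quotient $V:=\C\langle X\rangle_{\leq m/2}/(J_k\cap\C\langle X\rangle_{\leq m/2})$ (full support and continuity force $P(A)=0$ identically when $\tau(P^*P)=0$). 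Given a sequence $X_\nu=\sum_i P_{\nu,i}^*P_{\nu,i}+c_\nu+j_\nu\in C_m$ with $X_\nu\to X$, fix a linear section of the quotient map $\C\langle X\rangle_{\leq m/2}\to V$ and replace each $P_{\nu,i}$ by the lift of its class; the resulting change in $\sum_i P_{\nu,i}^*P_{\nu,i}$ lies in $J_k^h\cap\C\langle X\rangle_{\leq m}^h$ and is absorbed into $j_\nu$. Then $\sum_i\tau(P_{\nu,i}^*P_{\nu,i})=\tau(X_\nu)$ is bounded, and positive definiteness on $V$ bounds the $P_{\nu,i}$ in the finite-dimensional section. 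A convergent subsequence yields $\sum_i P_{\nu,i}^*P_{\nu,i}\to\sum_i P_i^*P_i\in Q_{m/2}$; the residual $c_\nu+j_\nu$ then converges inside the finite-dimensional (hence closed) subspace $(C_{\rm cyc}^h+J_k^h)\cap\C\langle X\rangle_{\leq m}^h$, yielding a decomposition of $X$ as an element of $C_m$.
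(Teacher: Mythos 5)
Your argument is correct and follows essentially the same route as the paper's proof: reduction to finite-dimensional slices via Lemma \ref{technical}, degree reduction of the sum-of-squares part using Proposition \ref{ideal} together with the homogeneity of $C_{\rm cyc}^h$ and $J_k^h$, and a compactness argument based on a full-support probability measure on tuples of self-adjoint $k\times k$-matrices (the paper uses a rapidly decreasing nowhere-vanishing density where you use a Gaussian). The only point worth making explicit is that Lemma \ref{cara} also bounds the number of summands by $(n+1)^m$, which is what licenses extracting a convergent subsequence of the tuples $(P_{\nu,i})_i$.
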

\begin{proof} We have to show that  $\C\langle X\rangle_{\leq m}^h\cap \left( Q(\C\langle X\rangle)+C_{\rm cyc}^h + J_k^h\right)$ is closed, for all $m$. First take $a\in Q(\C\langle X\rangle)+C_{\rm cyc}^h + J_k^h$ of degree at most  $m$.
We write: $a = q + c + j$ with $q \in Q(\C\langle X \rangle), c \in C_{\rm cyc}^h$ and
$j \in J_k^h$. Assume $\deg(q)=2l >m$, write $q=\sum_i P_i^*P_i$ and let $Q_i$ denote the degree $l$ homogeneous part of $P_i$. Then $q'=\sum_i Q_i^*Q_i$ is the highest degree part of $q$, and from Proposition \ref{ideal} we see $Q_i\in J_k$ for all $i$. Note that we use homogeneity of $C_{\rm cyc}^h$ and $J_k^h$ at this point. So we can replace $P_i$ by $P_i-Q_i$, and add the counterterms $P_i^*Q_i + Q_i^*P_i -Q_i^*Q_i\in J_k^h$ to the element $j$. This  shows that we can assume $\deg(q)\leq m$ in the representation of $a$. Again by homogeneity of $C_{\rm cyc}^h $ and $J_k^h$ we can thus also assume $\deg(c)\leq m, \deg(j)\leq m$.

Now take any $y$ from the closure of $\C\langle X\rangle_{\leq m}^h\cap \left( Q(\C\langle X\rangle)+C_{\rm cyc}^h + J_k^h\right)$. There exist a sequence $(y_i)_{i \in \N}$ in 
$\C\langle X \rangle_{\leq m}^h \cap \left(Q(\C\langle X \rangle) + C_{\rm cyc}^h + J^h_k\right)$ with limit point $y \in \C\langle X \rangle^h_{\leq m}$.
We write $y_i = q_i + c_i + j_i$ as above, where the degrees of $q_i,c_i$ and $j_i$ do not exceed $m$. 

We claim that, up to a modification, we can choose a convergent subsequence of the sequence $(q_i)_{i\in N}$.
Indeed, let us consider a rapidly decreasing, nowhere vanishing and positive function $\psi$ on
$Y_k=(M_{k} \C^h)^n$, the $n$-fold cartesian product of the space of self-adjoint $k \times k$-matrices. We may assume that $$\int_{Y_k} \psi(x) d\lambda(x) =1.$$ Obviously, the kernel of the tautological homomorphism $\phi_k \colon \C \langle X \rangle \to C(Y_k,M_k\C)$ is precisely $J_k$. We define the $\psi$-normalized trace
$$\tau(P) = \int_{Y_k} {\rm tr}(\phi_k (P(x))) \psi(x) d\lambda(x),$$
which clearly vanishes on $C_{\rm cyc} + J_k$.
By Lemma \ref{cara}, we can write 
$$q_i = \sum_{j=1}^{(n+1)^{m}} x_{ij}^* x_{ij},$$ for some sequences
$(x_{ij})_{i \in \N}$ in $\C\langle X \rangle_{\leq m/2}$ and $1 \leq j \leq (n+1)^{m}$. We get 
$$\tau\left(\sum_{j} x_{ij}^* x_{ij}\right) \to \tau(y), \qquad \mbox{for } i \to \infty$$ by continuity. 
Hence, the sequences $(x_{ij})_{i \in \N}$ are bounded with respect
to the associated 2-norm on $\C\langle X \rangle_{\leq m/2}/\C\langle X \rangle_{\leq m/2} \cap J_k$. Therefore, we can lift them to
bounded sequences $(x_{ij} + y_{ij})_{i \in \N}$ in $\C\langle X \rangle_{\leq m/2}$ with $y_{ij} \in \C\langle X \rangle_{\leq m/2 }\cap J_k$. After a modification
in $\C \langle X \rangle_{\leq m} \cap J_k$, i.e.\ replacing $q_i$ by $\sum_{j} (x_{ij} + y_{ij})^*(x_{ij} + y_{ij})$, $j_i$ by $j_i - \sum_j \left( x_{ij}^*y_{ij} + y_{ij}^*x_{ij} + y_{ij}^*y_{ij}\right)$ and passing to a subsequence, we may thus assume that $q_i$ converges to some $q \in \C\langle X \rangle_{\leq m}$. Since $Q(\C\langle X\rangle)$ is closed, we get $q \in Q(\C\langle X\rangle) \cap \C\langle X \rangle_{\leq m}$.
Note, that now also $c_i + j_i \to c + j$ for some $c \in C_{\rm cyc}^h$ and $j \in J_k^h$,  since $C_{\rm cyc}^h + J_k^h$ is closed.
Hence $y = q + c + j  \in Q(\C\langle X\rangle)+C_{\rm cyc}^h + J_k^h,$ as desired.
\end{proof}

\begin{theorem} \label{crucial}
Let $2k >m$ be natural numbers. Then 
$$ \C\langle X \rangle_{\leq m}^h \cap \left(Q(\C\langle X \rangle) + C_{\rm cyc}^h +  J_k^h\right)
= \C\langle X \rangle_{\leq m}^h \cap \left(Q(\C\langle X \rangle) + C_{\rm cyc}^h\right)$$
\end{theorem}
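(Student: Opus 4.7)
The inclusion ``$\supseteq$'' is immediate, since $0\in J_k^h$. For the reverse inclusion, the plan is to take $a\in\C\langle X\rangle_{\leq m}^h\cap\bigl(Q(\C\langle X\rangle)+C_{\rm cyc}^h+J_k^h\bigr)$ and show that under the hypothesis $2k>m$, the $J_k^h$-summand can be absorbed into $0$.

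First I would write $a = q + c + j$ with $q\in Q(\C\langle X\rangle)$, $c\in C_{\rm cyc}^h$, $j\in J_k^h$, and repeat verbatim the degree-reduction argument that already appears in the first paragraph of the proof of Theorem \ref{endlich}. Concretely: if $\deg(q)=2l>m$, write $q=\sum_i P_i^*P_i$ and isolate the highest homogeneous part $q'=\sum_i Q_i^*Q_i$, where $Q_i$ is the degree-$l$ part of $P_i$. Since $\deg(a)\leq m<2l$, by homogeneity of $C_{\rm cyc}^h$ and $J_k^h$ (Remark \ref{hom} and Proposition \ref{graded}) the element $q'$ must lie in $C_{\rm cyc}^h+J_k^h\subseteq J_k+C_{\rm cyc}$, so Proposition \ref{ideal} forces $Q_i\in J_k$. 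Replacing $P_i$ by $P_i-Q_i$ and transferring the counterterm $P_i^*Q_i+Q_i^*P_i-Q_i^*Q_i\in J_k^h$ into $j$, one strictly decreases $\deg(q)$; iterating, and then truncating $c$ and $j$ to their components of degree $\leq m$ (again by homogeneity), I may assume $\deg(q),\deg(c),\deg(j)\leq m$.

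The key step is now to invoke the corollary of the Amitsur--Levitzki theorem proved just before Definition \ref{deftyp}: every homogeneous part of an element of $J_k$ has degree $\geq 2k$. Since $j\in J_k^h$ has degree $\leq m<2k$, each of its homogeneous parts would have to satisfy both $\deg\leq m$ and $\deg\geq 2k$, which is impossible unless that part is $0$. Hence $j=0$, and $a=q+c\in Q(\C\langle X\rangle)+C_{\rm cyc}^h$, as required.

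I do not expect any real obstacle here: all the ingredients are already in place, and the hypothesis $2k>m$ is used precisely at the last step to kill the residual element of $J_k^h$. The only point to double-check is that the reduction from the proof of Theorem \ref{endlich} does not secretly use closedness of any cone — it uses only the fact that $C_{\rm cyc}^h$ and $J_k^h$ are homogeneous together with Proposition \ref{ideal}, both of which are available unconditionally.
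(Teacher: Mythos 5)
Your proposal is correct and follows essentially the same route as the paper: reduce to a representation $a=q+c+j$ with all three summands of degree at most $m$ (exactly the degree-reduction from the proof of Theorem \ref{endlich}), then invoke the Amitsur--Levitzki corollary to conclude that $j\in J_k^h$ of degree $\leq m<2k$ must vanish. Your added remark that the reduction step uses only homogeneity of $C_{\rm cyc}^h$ and $J_k^h$ together with Proposition \ref{ideal}, and no closedness, is accurate and worth noting.
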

\begin{proof}
The inclusion of the right side in the left side is obvious. For the other direction let $a \in Q(\C\langle X \rangle) + C_{\rm cyc}^h + J_k^h$ be  of degree at most  $m$.
Write $a = q + c + j$ with $q \in Q(\C\langle X \rangle), c \in C_{\rm cyc}^h$ and
$j \in J_k^h$. As in the proof of Theorem \ref{endlich} we can assume  that $q,c,j$ have degree at most $m$.
By Corollary \ref{infinite}, for $2k >m$, the least non-vanishing coefficient 
of $j$ has degree strictly bigger than $m$. Hence $j=0$ and the proof is finished.
\end{proof}

\begin{corollary}\label{by} $Q(\C\langle X\rangle)+ C_{\rm cyc}^h$ is a closed convex cone in $\C\langle X\rangle^h$.
\end{corollary}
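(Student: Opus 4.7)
The strategy is to combine the two immediately preceding theorems with the local-vs-global closedness criterion from Lemma \ref{technical}. Since $\C\langle X\rangle^h$ is a countable dimensional real vector space (it has a basis indexed by words together with suitable $i$-twists), Lemma \ref{technical} reduces the task to showing that for every $m\in\N$, the intersection
$$\C\langle X\rangle_{\leq m}^h \cap \bigl(Q(\C\langle X\rangle)+C_{\rm cyc}^h\bigr)$$
is closed in the finite dimensional subspace $\C\langle X\rangle_{\leq m}^h$ (equipped with its euclidean topology).

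So I would fix $m$ and then choose any $k$ with $2k>m$. By Theorem \ref{crucial}, the addition of the ideal piece $J_k^h$ costs nothing in this degree range, i.e.
$$\C\langle X\rangle_{\leq m}^h \cap \bigl(Q(\C\langle X\rangle)+C_{\rm cyc}^h\bigr) = \C\langle X\rangle_{\leq m}^h \cap \bigl(Q(\C\langle X\rangle)+C_{\rm cyc}^h+J_k^h\bigr).$$
The right hand side is the intersection of the finite dimensional (hence closed) subspace $\C\langle X\rangle_{\leq m}^h$ with the set $Q(\C\langle X\rangle)+C_{\rm cyc}^h+J_k^h$, which is closed by Theorem \ref{endlich}. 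Consequently the right hand side, and thus the left hand side, is closed.

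Applying Lemma \ref{technical} with the finite dimensional subspaces $\C\langle X\rangle_{\leq m}^h$ exhausting $\C\langle X\rangle^h$ as $m\to \infty$, we conclude that $Q(\C\langle X\rangle)+C_{\rm cyc}^h$ itself is closed in $\C\langle X\rangle^h$ in the finest locally convex topology; it is clearly a convex cone.

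There is no genuine obstacle here: the hard analytic work sits in Theorem \ref{endlich} (a compactness-type argument based on Lemma \ref{cara} and a normalized trace to keep the sums-of-squares representations bounded) and in the degree-dimension interaction of Theorem \ref{crucial} (which uses Amitsur--Levitzki to control the degree of the $J_k$-term). The only point to be careful about is ensuring that the degree truncation is compatible with the finest locally convex topology, which is precisely guaranteed by Lemma \ref{technical}.
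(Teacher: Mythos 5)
Your argument is correct and is exactly the paper's intended proof: the paper's own justification simply cites Theorem \ref{endlich}, Theorem \ref{crucial} and Lemma \ref{technical}, and your write-up fills in precisely those steps (reduce to the truncations $\C\langle X\rangle_{\leq m}^h$ via Lemma \ref{technical}, choose $k$ with $2k>m$, and combine the two theorems).
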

\begin{proof}
Clear from Theorem \ref{endlich}, Theorem \ref{crucial} and Lemma \ref{technical}.
\end{proof}

\begin{theorem}\label{main1}
Let $\tau$ be a positive trace on $\C\langle X\rangle.$ Then there is a sequence of positive traces $(\tau_k)_{k\in\N}$, converging to $\tau$ pointwise on $\C\langle X\rangle$, such that each $\tau_k$ vanishes on $J_k$.
\end{theorem}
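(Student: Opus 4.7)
The plan is to combine the convex-geometric extension result of Proposition \ref{rie} with the two closedness/identification theorems just proved (Theorems \ref{endlich} and \ref{crucial}), applied to an exhaustion of $\C\langle X\rangle^h$ by finite-dimensional subspaces $F_m := \C\langle X\rangle_{\leq m}^h$. The starting observation is that the restriction of the given positive trace $\tau$ to $\C\langle X\rangle^h$ is a real-linear functional that is nonnegative on the convex cone $Q(\C\langle X\rangle) + C_{\rm cyc}^h$ (the $C_{\rm cyc}^h$ part is in fact in its kernel). What we need is to replace $\tau$, up to arbitrary precision on any prescribed finite-degree subspace, by a real-linear functional that is additionally nonnegative on $J_k^h$; since $J_k^h$ is a linear subspace, any such functional must vanish on $J_k^h$, and then by Proposition \ref{graded} and complex-linear extension it will vanish on all of $J_k$.

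The key step is the following. Fix $k$, pick $m = m(k) < 2k$ with $m(k) \to \infty$, and take $F = F_{m(k)}$. Set $B := Q(\C\langle X\rangle) + C_{\rm cyc}^h + J_k^h$, which is a closed convex cone by Theorem \ref{endlich}. Because $2k > m$, Theorem \ref{crucial} gives
\[
F \cap B \;=\; F \cap \bigl(Q(\C\langle X\rangle) + C_{\rm cyc}^h\bigr),
\]
so $\tau|_F$ is nonnegative on $F \cap B$. Now apply Proposition \ref{rie} with $V = \C\langle X\rangle^h$ (countable dimensional), the closed cone $B$, the finite-dimensional subspace $F$, and some chosen norm on $F$, with tolerance $\varepsilon = 1/k$. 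This yields an $\R$-linear functional $\psi_k \colon \C\langle X\rangle^h \to \R$ with $\psi_k \geq 0$ on $B$ and $\|\psi_k|_F - \tau|_F\| \leq 1/k$.

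Since $C_{\rm cyc}^h$ and $J_k^h$ sit inside $B$ as linear subspaces, $\psi_k$ vanishes on both. Extending complex-linearly to $\C\langle X\rangle$ and invoking Lemma \ref{ext} and Proposition \ref{graded}, the extension (still called $\psi_k$) is a positive trace on $\C\langle X\rangle$ that vanishes on $J_k$. By construction $\psi_k(1) \to \tau(1) = 1$, so for all sufficiently large $k$ we have $\psi_k(1) > 0$ and may set $\tau_k := \psi_k / \psi_k(1)$; this is again a positive trace vanishing on $J_k$, with $\tau_k(1) = 1$. Because the subspaces $F_{m(k)}$ exhaust $\C\langle X\rangle^h$ and $\|\tau_k|_{F_{m(k)}} - \tau|_{F_{m(k)}}\| \to 0$, the sequence $(\tau_k)$ converges to $\tau$ pointwise on $\C\langle X\rangle^h$, hence on all of $\C\langle X\rangle$ by complex-linearity.

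The main obstacle is really the verification of the hypothesis of Proposition \ref{rie}: one has to recognize that $\tau|_F$ is positive on $F \cap B$ (not merely on $F \cap (Q + C_{\rm cyc}^h)$), and this is precisely the content of Theorem \ref{crucial}, which in turn depends on the Amitsur--Levitzki degree bound. The closedness of $B$, needed to invoke the proposition, is Theorem \ref{endlich}. Once these ingredients are in place, the argument is essentially a bookkeeping exercise: exhaust by finite-dimensional subspaces, apply the extension theorem with shrinking tolerance, and renormalize.
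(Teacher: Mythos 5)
Your proposal is correct and follows essentially the same route as the paper: restrict $\tau$ to a finite-degree subspace, use Theorem \ref{crucial} to upgrade positivity from $Q(\C\langle X\rangle)+C_{\rm cyc}^h$ to the larger cone including $J_k^h$, invoke Proposition \ref{rie} (with closedness from Theorem \ref{endlich}) to extend with small error, and then normalize. The only cosmetic difference is that the paper takes $m(k)=k$ rather than a general $m(k)<2k$ tending to infinity.
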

\begin{proof} Consider $\tau$ as an $\R$-linear functional on $\C\langle X\rangle_{\leq k}^h.$ It is clearly nonnegative on the convex cone $\left(Q(\C\langle X\rangle) + C_{\rm cyc}^h\right)\cap \C\langle X\rangle_{\leq k}.$ We can apply  Theorem \ref{crucial} and see that $\tau$ is nonnegative on $\left(Q(\C\langle X\rangle) + C_{\rm cyc}^h + J_k^h\right)\cap \C\langle X\rangle_{\leq k}.$ Now we apply Proposition \ref{rie} and find a linear functional $\tau_k$ on $\C\langle X\rangle^h$ that is nonnegative on $Q(\C\langle X\rangle) + C_{\rm cyc}^h + J_k^h$ and coincides on $\C\langle X\rangle_{\leq k}^h$ with $\tau$ up to $\frac1k$ say, in the operator norm. Note that we have also used Theorem \ref{endlich} here. The sequence of complex linear extension of the $\tau_k$ now converges pointwise on $\C\langle X\rangle$ to $\tau$. So in particular $\tau_k(1)\rightarrow \tau(1)=1$, and we can scale each $\tau_k$ with the positive factor $\frac{1}{\tau_k(1)}$ without destroying the convergence. The sequence we obtain in this way consists of positive traces as desired, see Remark \ref{extr}.
\end{proof}

\begin{remark} \label{infset}
Our setup is such that $X$ is always a finite set. 
If one considers a countable set of letters as we did already, a straightforward diagonalization procedure shows that the preceding theorem extends to traces on $\C\langle X_{\infty} \rangle$. Indeed one can to approximate the trace $\tau$ on the subalgebra $\C\langle X_1,\ldots,X_n\rangle$ by a trace $\tau_n,$ and use the canonical projection $\C\langle X_{\infty}\rangle \rightarrow \C\langle X_1,\ldots,X_n\rangle$ to pull $\tau_n$ back to a trace on $\C\langle X_{\infty}\rangle.$
\end{remark}

\subsection{H\"older and Minkowski type inequalities in tracial $*$-algebras}

Let $A$ be a unital $*$-algebra over $\C$.  Recall that a positive trace (or just a trace) on $A$ is a $\C$-linear mapping  $\tau\colon A\rightarrow \C$ fulfilling the following conditions: \begin{itemize}\item[(1)] $\tau(1)=1,$  \item[(2)] $ \tau(a^*a) \geq 0 $ for all $ a\in A,$ and \item[(3)] $\tau(ab)=\tau(ba)$ for all $a,b\in A$.
\end{itemize} 

Now for a fixed trace $\tau$, an even number $p$ and $a\in A$ we define $$ \Vert a\Vert_p := \tau\left( (a^*a)^{\frac{p}{2}}\right)^{\frac1p}.$$ This establishes a well defined mapping $\Vert\cdot\Vert_p\colon A\rightarrow \R_{\geq 0}$, which clearly fulfills $\Vert \lambda a\Vert_p = \vert\lambda\vert\cdot \Vert a\Vert_p$ and $\Vert a\Vert_p=\Vert a^*\Vert_p$   for $a\in A$ and $\lambda\in\C$.
In \cite{fackkos}, T.\ Fack and H.\ Kosaki showed that in a finite von Neumann algebra with a specified trace, one has
$$\|a + b\|_p \leq \|a\|_p + \|b\|_p, \quad \mbox{and} \quad \|ab\|_p \leq \|a\|_r \|b\|_s$$ for all $a,b \in A$, and positive $p,r,s$ satisfying
$\frac1p = \frac1r + \frac1s.$ The first inequality is a generalization of the classical Minkowski inequality whereas the second inequality generalizes H\"older's inequality. It is natural to expect that similar inequalities hold in the context of tracial $*$-algebras. We were not able to obtain the precise analogues of those inequalities, but we can establish the following weaker result.

\begin{theorem} \label{ineq}
Let $(A,\tau)$ be tracial $*$-algebra and let $p$ be an even positive integer.
We have,
$$\|a+b\|_p \leq \|a\|_q + \|b\|_q, \quad \mbox{and} \quad \|ab\|_p \leq \|a\|_{q'} \|b\|_{q'}$$
with $q=2^{2^{\frac p4-1}+1}$ and $q'= 2^{\frac{p}2+1}$.
\end{theorem}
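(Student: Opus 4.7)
The plan is to establish the multiplicative inequality first, then derive the additive one from it. Throughout the argument, I would work with $x = bb^*$ and $y = a^*a$, which are both non-negative Hermitian elements of $A$. By cyclic invariance of $\tau$,
\begin{equation*}
\|ab\|_p^p = \tau((b^*a^*ab)^{p/2}) = \tau((xy)^{m}), \qquad m := p/2.
\end{equation*}
Since $\tau(x^{2^m}) = \|b\|_{2^{m+1}}^{2^{m+1}}$ and $\tau(y^{2^m}) = \|a\|_{2^{m+1}}^{2^{m+1}}$, the multiplicative inequality with $q' = 2^{m+1} = 2^{p/2+1}$ is equivalent to the trace estimate
\begin{equation*}
\tau((xy)^m) \leq \tau(x^{2^m})^{m/2^m}\tau(y^{2^m})^{m/2^m}
\end{equation*}
for non-negative Hermitian $x,y$. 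I would prove this by induction on $m$. The base case $m=1$ is just $\tau(xy)^2 \leq \tau(x^2)\tau(y^2)$, the Cauchy--Schwarz inequality $|\tau(X^*Y)|^2 \leq \tau(X^*X)\tau(Y^*Y)$ applied with $X = x$ and $Y = y$.

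The inductive step is the technical core. The main tools are: (i) the Cauchy--Schwarz bound $\tau((xy)^{2n}) \leq \tau((xy)^n(yx)^n) = \|(xy)^n\|_2^2$, obtained by writing $\tau((xy)^{2n}) = \tau(A \cdot A)$ with $A = (xy)^n$; (ii) cyclic rewritings of traces such as $\tau((yxy)^k) = \tau((xy^2)^k)$, which allow us to absorb a factor $y$ into $y^2$ at the cost of doubling an exponent; and (iii) the Hermitian Cauchy--Schwarz $\tau(XY) \leq \tau(X^2)^{1/2}\tau(Y^2)^{1/2}$ for positive Hermitian $X,Y$. Alternating these moves, with a judicious grouping of the letters, shows that $\tau((xy)^m)$ is dominated by a product of trace factors in powers of $x$ and $y$; since each round of the iteration at most doubles the exponents on $x,y$ appearing on the right-hand side, after $O(\log m)$ rounds one reaches exponent $2^m$, yielding the target estimate.

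For the additive inequality I would first iterate the multiplicative bound to obtain a multi-factor Hölder estimate of the form $|\tau(c_1 c_2 \cdots c_n)| \leq \prod_i \|c_i\|_Q$ valid for sufficiently large $Q$. Writing $u = a+b$ and expanding
\begin{equation*}
\|a+b\|_p^p = \tau((u^*u)^{p/2}) = \tau(u^*u u^*u \cdots u^*u)
\end{equation*}
($p$ alternating factors), then substituting $u = a+b$ and $u^* = a^*+b^*$, produces a sum of $2^p$ trace monomials in $\{a,a^*,b,b^*\}$ of length $p$. Bounding each monomial via the multi-factor Hölder estimate and grouping by the number of $a$-type versus $b$-type factors appearing, the sum is dominated by $(\|a\|_q + \|b\|_q)^p$ via the binomial theorem, giving the Minkowski-type inequality. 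The doubly exponential value $q = 2^{2^{p/4-1}+1}$ arises precisely from iterating the already-weak multiplicative exponent $q'$ across the $p$ factors of the expansion.

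The main obstacle is the absence of a functional calculus in a general tracial $*$-algebra. In a finite von Neumann algebra one would invoke the Araki--Lieb--Thirring inequality $\tau((xy)^m) \leq \tau(x^m y^m)$ (or H\"older in its sharp form) to obtain the classical exponents $q = p$ and $q' = 2p$; these proofs rely crucially on square roots of positive elements, which do not exist here. The iterated Cauchy--Schwarz argument sketched above is the natural substitute, but inevitably doubles the exponents on $x$ and $y$ at each step, producing the (doubly) exponential bounds $q$ and $q'$ recorded in the theorem.
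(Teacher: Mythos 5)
Your overall strategy is the paper's: everything is driven by iterated Cauchy--Schwarz combined with cyclicity of the trace, first to obtain a generalized H\"older bound for traces of products, after which Minkowski follows by expanding $((a+b)^*(a+b))^{p/2}$ into monomials and bounding each one. Your reduction of the multiplicative inequality to the trace estimate $\tau((xy)^m)\le \tau(x^{2^m})^{m/2^m}\tau(y^{2^m})^{m/2^m}$ for $x=bb^*$, $y=a^*a$, $m=p/2$ is correct, and the exponent bookkeeping for both $q'$ and $q$ matches the paper's.

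However, the step you yourself call ``the technical core'' --- the inductive proof of that trace estimate --- is not carried out, and as described it does not close. Two concrete problems. First, an induction on $m$ restricted to words of the special alternating form $(xy)^m$ cannot be self-contained: after one application of Cauchy--Schwarz you obtain $\tau((xy)^n(yx)^n)$, and the cyclic word now contains a block $y^2$ at the middle junction and a block $x^2$ at the wrap-around; it is no longer of the form $(x'y')^{n'}$ for two positive elements, so the induction hypothesis does not apply. The right induction hypothesis is the fully general $n$-factor inequality $|\tau(a_1\cdots a_n)|\le\prod_{i}\tau\bigl((a_i^*a_i)^{\varphi(n)}\bigr)^{\frac{1}{2\varphi(n)}}$ with $\varphi(n)=2^{n/2-1}$ (Lemma \ref{cs} of the paper), proved by induction on the number of factors, descending from $n$ to $n-2$: split the word in the middle, apply Cauchy--Schwarz, use cyclicity to fuse the two boundary letters of each half into single new letters, and invoke the hypothesis for $n-2$ factors. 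Second, your accounting ``each round at most doubles the exponents, so after $O(\log m)$ rounds one reaches exponent $2^m$'' is inconsistent: $O(\log m)$ doublings starting from a bounded exponent give an exponent polynomial in $m$, not $2^m$. The doubling in fact occurs once per pair of factors removed, i.e.\ over roughly $m$ rounds, which is exactly where $2^m$ (equivalently $2\varphi(2m)$) comes from. Once the general $n$-factor lemma is in place, both your H\"older step (apply it with $n=p$ to the word $(a^*a\,bb^*)^{p/2}$) and your Minkowski step (expand, bound each monomial, resum) go through essentially as you describe and as the paper does.
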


The preceding theorem is a consequence of Proposition \ref{holder} and \ref{mink}, which we are going to prove in the sequel. We do not claim that the dependence of $q$ and $q'$ on $p$ is optimal. Indeed, we were not able to exclude the possibility that the inequalities of Minkowski and H\"older have an extension in their classical form to the realm of tracial $*$-algebras. One can easily see that for any trace which is a point-wise limit of bounded traces, the inequalities hold in their classical form.

\vspace{0.2cm}

For the whole section we will use the abbreviation $\varphi(n)=2^{\frac{n}{2}-1}$ for even $n\in\N$, and $\varphi(n)=\varphi(n+1)$ for $n$ odd.    We start with the following generalized Cauchy-Schwarz inequality: 

\begin{lemma}\label{cs} Let $\tau$ be a trace on $A$. For all $n\in\N$ and all  $a_1,\ldots,a_n\in A$ we have $$\vert \tau(a_1\cdots a_n)\vert\leq \prod_{i=1}^n \tau\left( (a_i^*a_i)^{\varphi(n)}  \right)^{\frac{1}{2\varphi(n)}}.$$ 
\end{lemma}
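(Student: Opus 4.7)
The plan is to induct on $n$, repeatedly applying the Cauchy--Schwarz inequality for the positive semidefinite sesquilinear form $(x,y)\mapsto\tau(y^*x)$ available on any tracial $*$-algebra, together with cyclicity of $\tau$. The base cases $n=1,2$ are immediate: for $n=2$ we get $|\tau(a_1 a_2)|^2\leq \tau(a_1 a_1^*)\tau(a_2^* a_2)=\tau(a_1^*a_1)\tau(a_2^*a_2)$, matching $\varphi(2)=1$, and $n=1$ follows by inserting $a_2=1$ (using $\tau(1)=1$). For odd $n\geq 3$ I would reduce to the even case $n+1$ by writing $\tau(a_1\cdots a_n)=\tau(a_1\cdots a_n\cdot 1)$: the factor from $a_{n+1}=1$ is trivial, and the identity $\varphi(n)=\varphi(n+1)$ matches the exponents exactly.

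For $n=2m$ even, apply Cauchy--Schwarz once to split in the middle:
$$|\tau(a_1\cdots a_{2m})|^2 \leq \tau\bigl((a_1\cdots a_m)(a_1\cdots a_m)^*\bigr)\cdot \tau\bigl((a_{m+1}\cdots a_{2m})(a_{m+1}\cdots a_{2m})^*\bigr),$$
after using cyclicity $\tau(Q^*Q)=\tau(QQ^*)$ on the second factor. It then suffices to prove the \emph{palindromic bound}
$$\tau\bigl((a_1\cdots a_m)(a_1\cdots a_m)^*\bigr) \leq \prod_{i=1}^m \tau\bigl((a_i^*a_i)^{2^{m-1}}\bigr)^{1/2^{m-1}},$$
since plugging this into both halves and taking a square root gives precisely the claim with $\varphi(2m)=2^{m-1}$ and $1/(2\varphi(2m))=1/2^m$.

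I would prove the palindromic bound by a strengthened secondary induction on $m$, controlling $\tau((YY^*)^{2^k})$ for every $k\geq 0$ simultaneously. The inductive step uses two ingredients. First, the cyclic identity $\tau((a_1 ZZ^* a_1^*)^{2^k}) = \tau(((a_1^*a_1)(ZZ^*))^{2^k})$, obtained by cycling $a_1^*$ around inside the $2^k$-fold product (here $Z=a_2\cdots a_{m+1}$ has length $m$). Second, the key \emph{doubling sub-lemma}: for positive self-adjoint $C,D$ in a tracial $*$-algebra and $k\geq 0$,
$$\tau((CD)^{2^k}) \leq \tau(C^{2^{k+1}})^{1/2}\tau(D^{2^{k+1}})^{1/2}.$$
Applied with $C=a_1^*a_1$ and $D=ZZ^*$, this bounds $\tau((YY^*)^{2^k})$ by $\tau((a_1^*a_1)^{2^{k+1}})^{1/2}\tau((ZZ^*)^{2^{k+1}})^{1/2}$; the second factor is controlled by the induction hypothesis applied to the shorter product $Z$ at the higher power $2^{k+1}$. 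Lyapunov's power-mean inequality $\tau(x^p)^{1/p}\leq \tau(x^q)^{1/q}$ (for $p\leq q$, positive self-adjoint $x$, $\tau(1)=1$) is then used to uniformize all the resulting exponents into the single exponent $\varphi(n)$.

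I expect the main obstacle to be the doubling sub-lemma at $k\geq 2$. For $k=0$ it is just Cauchy--Schwarz, and for $k=1$ it follows from $|\tau(CDCD)|^2\leq \tau(CD^2C)\tau(DC^2D)=\tau(C^2D^2)^2$ (via cyclicity) combined with a second Cauchy--Schwarz giving $\tau(C^2D^2)\leq \tau(C^4)^{1/2}\tau(D^4)^{1/2}$. For $k\geq 2$ I would reduce $\tau((CD)^{2^{k+1}})$ by Cauchy--Schwarz to $\tau(X^2D^2)$, where $X=(CD)^{2^k-1}C$ is self-adjoint (the identity $X=C(DC)^{2^k-1}$ shows $X^*=X$); a further Cauchy--Schwarz gives $\tau(X^2D^2)\leq \tau(X^4)^{1/2}\tau(D^4)^{1/2}$. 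The delicate point is a cyclic re-identification of $\tau(X^4)$ in terms of $(CD)$-type traces --- for instance at $k=1$ one verifies $\tau((CDC)^4)=\tau((C^2D)^4)$ by cycling $C$'s --- in order to close the induction. Without functional calculus at our disposal in the algebraic setting, every such simplification must be executed through direct rearrangement of non-commuting letters under the trace.
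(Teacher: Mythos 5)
Your base cases, the middle Cauchy--Schwarz split into two palindromes, the Lyapunov step $\tau(x^{2^j})^{1/2^j}\leq\tau(x^{2^{j+1}})^{1/2^{j+1}}$, and the bookkeeping of exponents are all fine, and if your doubling sub-lemma held for all $k$ the argument would close. But the sub-lemma for $k\geq 2$ is a genuine gap, not just a delicate point. Your reduction gives $\tau((CD)^{2^k})\leq\tau(X^4)^{1/2}\tau(D^4)^{1/2}$ with $X=(CD)^{2^{k-1}-1}C$, and the cyclic re-identification of $\tau(X^4)$ only returns an expression of the form $\tau((C'D)^{\text{power of }2})$ when $2^{k-1}-1=1$, i.e.\ at $k=2$ one gets $\tau(X^4)=\tau((C^2D(CD)^{2^{k-1}-2})^4)$, which for $k\geq 3$ is a word of length $3\cdot 2^{k+1}$ in $C,D$ that is not an alternating power of two positive elements, so the induction does not recurse into itself. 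Even at $k=2$, where the identification does work, the resulting inequality $\tau((CD)^4)\leq\tau((C^2D)^4)^{1/2}\tau(D^4)^{1/2}$ escalates: iterating replaces $C$ by $C^2$ while keeping the fourth power on the alternating word, producing $\tau((C^{2^j}D)^4)^{1/2^j}$, and this only converges to the desired bound if one can pass to a limit --- which requires $\tau$-boundedness of $C$, precisely the hypothesis this paper refuses to assume. Note also that your sub-lemma is an Araki--Lieb--Thirring-type inequality; its validity for an arbitrary positive trace (as opposed to one coming from a von Neumann algebra, where functional calculus is available) is exactly the kind of statement one cannot take for granted here, and for alternating words it is strictly \emph{stronger} than what the lemma itself yields, so you are trying to prove the hard special case first.

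The paper's proof avoids this trap by recursing differently: after the same middle Cauchy--Schwarz split, it cyclically rotates each half so that the two boundary factors merge into the single positive elements $a_1^*a_1$ and $a_{n/2}a_{n/2}^*$, leaving a product of $n-2$ \emph{general} (not alternating, not palindromic) elements, to which the induction hypothesis for $n-2$ applies directly; the merged factors come out with exponent $2\varphi(n-2)=\varphi(n)$ automatically, and the unmerged ones are upgraded from $\varphi(n-2)$ to $\varphi(n)$ by the elementary inequality $\tau(x)^2\leq\tau(x^2)$. Because the inductive statement stays in the class of arbitrary $n$-fold products, each step needs only plain Cauchy--Schwarz and cyclicity. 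I would recommend abandoning the palindromic/doubling route and restructuring your induction so that it decreases $n$ by $2$ while doubling $\varphi$, as in the paper.
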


\begin{proof} The proof is by induction on $n$, first for $n$ even. For $n=2$ the statement follows immediately from the Cauchy-Schwarz inequality, using $\tau(a^*a)=\tau(aa^*)$ for all $a\in A$. For arbitrary even $n\geq 4$ we find \begin{align*}\vert\tau(a_1\cdots a_n)\vert & \leq \tau\left( a_{\frac{n}{2}}^{*} \cdots a_1^*a_1\cdots a_{\frac{n}{2}} \right)^{\frac12} \cdot \tau\left(a_n^* \cdots a_{\frac{n}{2}+1}^*a_{\frac{n}{2}+1}\cdots a_n\right)^{\frac12}\\ &= \tau\left(a_\frac{n}{2}a_{\frac{n}{2}}^* a_{\frac{n}{2}-1}^* \cdots a_1^*a_1\cdots a_{\frac{n}{2}-1} \right)^{\frac 12}\cdot \tau\left(a_n a_n^* a_{n-1}^* \cdots a_{\frac{n}{2}+1}^* a_{\frac{n}{2}+1}\cdots a_{n-1}\right)^{\frac12},\end{align*} where we used the Cauchy-Schwarz inequality in the first step, and condition (3) of the trace $\tau$ in the second. Now we apply the induction hypothesis for $n-2$ to the two terms of of the product. Indeed we have 

\begin{eqnarray*}\tau\left(a_\frac{n}{2}a_{\frac{n}{2}}^* a_{\frac{n}{2}-1}^* \cdots a_1^*a_1\cdots a_{\frac{n}{2}-1}\right)^{2\varphi(n-2)} 
&\leq &\ \tau\left( \left(a_{\frac{n}{2}}a_{\frac{n}{2}}^*a_{\frac{n}{2}}a_{\frac{n}{2}}^*\right)^{\varphi(n-2)}\right)\cdot\prod_{i=\frac{n}{2}-1}^{2}  \tau\left((a_ia_i^*)^{\varphi(n-2)}\right)  \\ &&\cdot \ \tau\left( (a_1^*a_1a_1^*a_1)^{\varphi(n-2)}\right)  \cdot\prod_{i=2}^{\frac{n}{2}-1}  \tau\left((a_i^*a_i)^{\varphi(n-2)}\right) \\ &=&  \tau\left( (a_1^*a_1)^{2\varphi(n-2)}\right)\cdot\prod_{i=2}^{\frac{n}{2}-1}  \tau\left((a_i^*a_i)^{\varphi(n-2)}\right)^2 \\ && \cdot \ \tau\left( \left(a_{\frac{n}{2}}^*a_{\frac{n}{2}}\right)^{2\varphi(n-2)}\right) \\ &\leq&  \prod_{i=1}^{\frac{n}{2}}  \tau\left((a_i^*a_i)^{2\varphi(n-2)}\right). 
\end{eqnarray*}

Note that for the first step we have used the induction hypothesis, for the second step property (3) of $\tau$ again, and in the third step the Cauchy-Schwarz inequality to see $$  \tau\left((a_i^*a_i)^{\varphi(n-2)}\right)^2\leq   \tau\left((a_i^*a_i)^{2\varphi(n-2)}\right).$$ The same procedure applies to the second term in the above inequality and all in all yields $$\vert \tau (a_1\cdots a_n)\vert \leq \prod_{i=1}^n \tau\left( (a_i^*a_i)^{2\varphi(n-2)}\right)^{\frac{1}{2\varphi(n-2)}\cdot\frac12}.$$ Since $2\varphi(n-2)=\varphi(n)$, the claim is proven for even $n$. For $n$ odd it now follows easily by setting $a_{n+1}=1$ and applying the already established result to $a_1,\ldots,a_{n+1}$.
\end{proof}

The result can be used to derive the following inequality, which will be used below: 

\begin{lemma}\label{sum} Let $\tau$ be a trace  on $A$. For all $r,n\in\N$ and all $a_1,\ldots,a_n\in A$ we have $$ \vert \tau\left( (a_1+\cdots +a_n)^r\right)\vert \leq \left( \sum_{i=1}^n \tau\left( (a_i^*a_i)^{\varphi(r)} \right)^{\frac{1}{2\varphi(r)}}\right)^r.$$ 
\end{lemma}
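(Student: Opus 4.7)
The plan is to reduce everything to the generalized Cauchy--Schwarz inequality (Lemma \ref{cs}) by expanding the $r$-th power. First I would expand, using bilinearity of multiplication,
\[
(a_1+\cdots+a_n)^r = \sum_{(i_1,\ldots,i_r)\in\{1,\ldots,n\}^r} a_{i_1}a_{i_2}\cdots a_{i_r},
\]
so by linearity of $\tau$ and the triangle inequality
\[
\bigl|\tau\bigl((a_1+\cdots+a_n)^r\bigr)\bigr| \;\leq\; \sum_{(i_1,\ldots,i_r)} \bigl|\tau(a_{i_1}a_{i_2}\cdots a_{i_r})\bigr|.
\]

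Next I would apply Lemma \ref{cs}, used with the parameter $n$ there replaced by $r$, to each of the $n^r$ products of length $r$. This gives
\[
\bigl|\tau(a_{i_1}\cdots a_{i_r})\bigr| \;\leq\; \prod_{j=1}^r \tau\bigl((a_{i_j}^*a_{i_j})^{\varphi(r)}\bigr)^{\frac{1}{2\varphi(r)}}.
\]
Introducing the abbreviation $b_i := \tau\bigl((a_i^*a_i)^{\varphi(r)}\bigr)^{1/(2\varphi(r))} \in \R_{\geq 0}$, this bound reads $|\tau(a_{i_1}\cdots a_{i_r})| \leq b_{i_1}b_{i_2}\cdots b_{i_r}$.

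Finally I would recognize the resulting sum as a factored product: summing over all multi-indices,
\[
\sum_{(i_1,\ldots,i_r)\in\{1,\ldots,n\}^r} b_{i_1}\cdots b_{i_r} \;=\; \left(\sum_{i=1}^n b_i\right)^{\!r},
\]
which is exactly the right-hand side of the claimed inequality. So the proof is essentially a term-by-term application of the generalized Cauchy--Schwarz inequality combined with the multinomial expansion of the $r$-th power. I do not anticipate a real obstacle here; the only thing to watch is that the exponent $\varphi(r)$ on the right-hand side is governed by the length $r$ of the product (the number of factors per monomial), not by the number $n$ of summands being raised to a power, which is consistent with how Lemma \ref{cs} is indexed.
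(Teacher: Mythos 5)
Your proof is correct and is essentially identical to the paper's: both expand the $r$-th power into the $n^r$ words of length $r$, apply the generalized Cauchy--Schwarz inequality (Lemma \ref{cs}) with parameter $r$ to each word, and refactor the resulting sum as the $r$-th power of $\sum_i \tau((a_i^*a_i)^{\varphi(r)})^{1/(2\varphi(r))}$. Your closing remark about $\varphi(r)$ being governed by the word length rather than the number of summands is exactly the point the paper's indexing makes as well.
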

\begin{proof} Let $\mathcal{W}_n^r$ denote the set of all words in $a_1,\ldots,a_n$ of length exactly $r$. For $\omega\in\mathcal{W}_n^r$ denote by $\omega(i)$ the number of occurences of $a_i$ in $\omega$. We have \begin{align*} \vert \tau \left( (a_1+\cdots+a_n)^r\right)\vert &\leq \sum_{\omega\in \mathcal{W}_n^r} \vert \tau(\omega)\vert \\ &\leq \sum_{\omega\in\mathcal{W}_n^r} \prod_{i=1}^n \tau\left( (a_i^*a_i)^{\varphi(r)}\right)^{\frac{\omega(i)}{2\varphi(r)}} \\ &= \left( \sum_{i=1}^n \tau\left((a_i^*a_i)^{\varphi(r)}\right)^{\frac{1}{2\varphi(r)}} \right)^r.\end{align*} Note that we have used Lemma \ref{cs} for the second inequality.
\end{proof}

The following is a H\"older type inequality for $\Vert \cdot \Vert_p$:

\begin{proposition}\label{holder}
For any even $p$ and any $a,b\in A$ we have   $$\Vert ab\Vert_p\leq \Vert a\Vert_{\varphi(p+4)}\Vert b\Vert_{\varphi(p+4)}.$$

\end{proposition}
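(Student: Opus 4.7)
The plan is to reduce the inequality to a direct application of Lemma \ref{cs} after first using cyclicity to put the expression in a convenient form.

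First, I would rewrite $\|ab\|_p^p = \tau\bigl(((ab)^*(ab))^{p/2}\bigr)$ by shuffling factors cyclically. Setting $A := a^*a$ and $B := bb^*$ (both self-adjoint, in fact of the form $c^*c$), the identity
$$(b^*a^*ab)^{p/2} = b^* a^* \cdot (ab b^* a^*)^{p/2-1} \cdot ab$$
combined with $\tau(xy)=\tau(yx)$ yields
$$\|ab\|_p^p = \tau\bigl((AB)^{p/2}\bigr).$$

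Next I would apply Lemma \ref{cs} with $n=p$ to the product $ABAB\cdots AB$ (which has exactly $p$ single-element factors, alternating between $A$ and $B$). Since $A^*=A$ and $B^*=B$, each factor $c_i^*c_i$ equals either $A^2$ or $B^2$, and there are $p/2$ of each kind. Thus
$$\bigl|\tau((AB)^{p/2})\bigr| \leq \tau\bigl(A^{2\varphi(p)}\bigr)^{\frac{p/2}{2\varphi(p)}} \cdot \tau\bigl(B^{2\varphi(p)}\bigr)^{\frac{p/2}{2\varphi(p)}}.$$

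The final step is bookkeeping with the function $\varphi$. Since $p$ is even, $\varphi(p)=2^{p/2-1}$, so $2\varphi(p)=2^{p/2}=\varphi(p+4)/2$ and $4\varphi(p)=2^{p/2+1}=\varphi(p+4)$. Hence
$$\tau\bigl(A^{2\varphi(p)}\bigr) = \tau\bigl((a^*a)^{\varphi(p+4)/2}\bigr) = \|a\|_{\varphi(p+4)}^{\varphi(p+4)},$$
and the exponent becomes $\frac{p/2}{2\varphi(p)} = \frac{p}{\varphi(p+4)}$, giving $\tau(A^{2\varphi(p)})^{p/(4\varphi(p))} = \|a\|_{\varphi(p+4)}^{p}$. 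For the $B$-factor I would additionally use $\tau((bb^*)^k)=\tau((b^*b)^k)$ (an immediate consequence of cyclicity) to identify $\tau(B^{2\varphi(p)})^{p/(4\varphi(p))}$ with $\|b\|_{\varphi(p+4)}^p$. Combining these and taking $p$-th roots yields the claimed inequality.

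The only nontrivial point is the initial cyclic rewriting that turns the single expression $((ab)^*(ab))^{p/2}$ into a genuinely alternating product of $p$ factors of self-adjoint elements; everything else is a direct invocation of Lemma \ref{cs} and arithmetic with the function $\varphi$.
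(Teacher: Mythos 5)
Your proposal is correct and follows essentially the same route as the paper: rewrite $\tau\bigl((b^*a^*ab)^{p/2}\bigr)$ cyclically as the trace of the alternating product of $p$ self-adjoint factors $a^*a$ and $bb^*$, apply Lemma \ref{cs} with $n=p$, and then do the arithmetic with $\varphi$ (including the cyclic identification $\tau((bb^*)^k)=\tau((b^*b)^k)$). The bookkeeping $4\varphi(p)=\varphi(p+4)$ matches the paper's computation exactly.
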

\begin{proof} We find
\begin{align*} \Vert ab\Vert_p & = \tau\left( (b^*a^*ab)^{\frac{p}{2}}\right)^{\frac1p} = \tau\left( a^*abb^* \cdots b^*a^*abb^*\right)^{\frac1p} \\ &\leq \tau\left( (a^*aa^*a)^{\varphi(p)}\right)^{\frac{1}{2p\varphi(p)}}   \tau\left( (bb^*bb^*)^{\varphi(p)}\right)^{\frac{1}{2p\varphi(p)}} \cdots \\ &= \left(\tau\left( (a^*aa^*a)^{\varphi(p)}\right)^{\frac{1}{2p\varphi(p)}}\right)^{\frac{p}{2}}  \left( \tau\left( (b^*bb^*b)^{\varphi(p)}\right)^{\frac{1}{2p\varphi(p)}} \right)^{\frac{p}{2}} \\ &= \tau\left( (a^*a)^{2\varphi(p)}\right)^{\frac{1}{4\varphi(p)}}   \tau\left( (b^*b)^{2\varphi(p)}\right)^{\frac{1}{4\varphi(p)}} \\ &= \Vert a\Vert_{\varphi(p+4)}\Vert b\Vert_{\varphi(p+4)},
\end{align*}
where we used property (3) of $\tau$ several times, and Lemma \ref{cs} for the inequality in the third step.
\end{proof}

We also get a Minkowski type inequality:

\begin{proposition} \label{mink} For any even $p$ and any $a,b\in A$ we have $$\Vert a+b \Vert_p \leq \Vert a \Vert_{\psi(p) }+ \Vert b\Vert_{\psi(p)},$$ where $\psi(p):= \varphi\left( 2\varphi(\frac{p}{2})+4 \right).$
\end{proposition}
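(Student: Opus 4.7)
The plan is to expand $\|a+b\|_p^p$ as the trace of the $(p/2)$-th power of a sum of four terms, apply the expansion inequality (Lemma \ref{sum}) to that sum, and then reduce the cross terms to the H\"older-type estimate already proved in Proposition \ref{holder}. Concretely, writing $r = p/2$, I would start from
$$\|a+b\|_p^p = \tau\bigl(((a+b)^*(a+b))^r\bigr) = \tau\bigl((x_1+x_2+x_3+x_4)^r\bigr),$$
with $x_1 = a^*a$, $x_2 = a^*b$, $x_3 = b^*a$ and $x_4 = b^*b$, and invoke Lemma \ref{sum} with $n=4$ to bound this expression by
$$\left(\sum_{i=1}^{4} \tau\bigl((x_i^*x_i)^{\varphi(r)}\bigr)^{1/(2\varphi(r))}\right)^{r}.$$

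The next step is to identify each of the four summands. Because $x_1^*x_1 = (a^*a)^2$ and $x_4^*x_4 = (b^*b)^2$, the two diagonal contributions simplify to $\|a\|_{4\varphi(r)}^{2}$ and $\|b\|_{4\varphi(r)}^{2}$. For the two off-diagonal ones I would recognise
$$\tau\bigl((x_2^*x_2)^{\varphi(r)}\bigr)^{1/(2\varphi(r))} = \tau\bigl(((a^*b)^*(a^*b))^{\varphi(r)}\bigr)^{1/(2\varphi(r))} = \|a^*b\|_{2\varphi(r)},$$
and analogously for $x_3$. Since $2\varphi(r)$ is an even positive integer, Proposition \ref{holder} applies (with $p$ replaced by $2\varphi(r)$) and gives exactly
$$\|a^*b\|_{2\varphi(r)} \leq \|a^*\|_{\varphi(2\varphi(r)+4)}\|b\|_{\varphi(2\varphi(r)+4)} = \|a\|_{\psi(p)}\|b\|_{\psi(p)},$$
which is precisely where the prescribed definition of $\psi(p)$ emerges from the argument.

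To conclude, the diagonal exponent $4\varphi(r)$ must be absorbed into $\psi(p)$. All exponents in sight are powers of $2$, and iterated application of the case $n=2$ of Lemma \ref{cs} (plain Cauchy--Schwarz) shows that $k \mapsto \|a\|_{2^k}$ is non-decreasing, while a direct arithmetic comparison confirms $4\varphi(p/2) \leq \psi(p)$ for every even $p \geq 2$. Substituting these bounds collapses the bracket into the perfect square $(\|a\|_{\psi(p)} + \|b\|_{\psi(p)})^2$; raising to the power $p/2$ and taking $p$-th roots produces the claim.

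The only real obstacle is exponent bookkeeping: one must verify that the output $\varphi(2\varphi(p/2)+4)$ from Proposition \ref{holder} literally coincides with the $\psi(p)$ in the statement, and check the (elementary) monotonicity inequality $4\varphi(p/2) \leq \psi(p)$. Beyond these arithmetic checks no new analytic input is needed; the proof is essentially a packaging of Lemmas \ref{cs}, \ref{sum} and Proposition \ref{holder}.
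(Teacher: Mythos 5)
Your proof is correct and follows essentially the same route as the paper: expand $\tau\bigl(((a+b)^*(a+b))^{p/2}\bigr)$ via Lemma \ref{sum} applied to the four terms $a^*a,\,a^*b,\,b^*a,\,b^*b$, and control the cross terms with Proposition \ref{holder}, which is exactly where $\psi(p)$ comes from. The only deviation is in the diagonal terms: the paper simply applies Proposition \ref{holder} to $\Vert a^*a\Vert_{2\varphi(p/2)}$ as well, obtaining $\Vert a\Vert_{\psi(p)}^2$ directly without needing the monotonicity of $k\mapsto\Vert a\Vert_{2^k}$ or the arithmetic check $4\varphi(p/2)\le\psi(p)$, whereas your exact evaluation of these terms as $\Vert a\Vert_{4\varphi(p/2)}^2$ together with those two (correct and elementary) verifications is an equally valid finish.
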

\begin{proof} We use Proposition \ref{sum} to get 
\begin{eqnarray*}  \Vert a+b \Vert _p^2 &=& \tau\left(  (a^*a + a^*b + b^*a + b^*b)^{\frac{p}{2}}   \right)^{\frac2p} \\ &\leq&  \tau\left(  (a^*aa^*a)^{\varphi(\frac{p}{2})} \right)^{\frac{1}{2\varphi(\frac{p}{2})}} +  \tau\left(  (b^*aa^*b)^{\varphi(\frac{p}{2})} \right)^{\frac{1}{2\varphi(\frac{p}{2})}} \\ && + \quad \tau\left(  (a^*bb^*a)^{\varphi(\frac{p}{2})} \right)^{\frac{1}{2\varphi(\frac{p}{2})}}  + \tau\left(  (b^*bb^*b)^{\varphi(\frac{p}{2})} \right)^{\frac{1}{2\varphi(\frac{p}{2})}} \\ &=& \Vert a^*a\Vert_{2\varphi(\frac{p}{2})} + \Vert a^*b\Vert_{2\varphi(\frac{p}{2})}  + \Vert b^*a\Vert_{2\varphi(\frac{p}{2})} + \Vert b^*b\Vert_{2\varphi(\frac{p}{2})} .\end{eqnarray*}

If we now apply Proposition \ref{holder} to each term in this last sum we get $$  \Vert a+b \Vert _p^2 \leq \Vert a \Vert_{\psi(p)}^2 + 2\Vert a\Vert_{\psi(p)}\Vert b \Vert_{\psi(p)}+ \Vert b\Vert_{\psi(p)}^2= \left(\Vert a\Vert_{\psi(p)}+ \Vert b \Vert_{\psi(p)}\right)^2, $$ and from this the desired result.
\end{proof}

We can now give an easy proof of the following well-known Corollary:
\begin{corollary}\label{ideal1} Let $(A,\tau)$ be a tracial $*$-algebra. Then $$Z_{\tau}:=\{ a\in A\mid \Vert a\Vert_2=0 \}=\{a\in A\mid \Vert a\Vert_p = 0 \mbox{ for all even } p\}$$ is a two-sided $*$-ideal in $A$. The trace $\tau$ vanishes on $Z_{\tau}$ and thus induces a faithful trace on the $*$-algebra $A/Z_{\tau}$.
\end{corollary}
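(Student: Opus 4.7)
The plan is first to upgrade the hypothesis $\Vert a\Vert_2=0$ to $\Vert a\Vert_p=0$ for every even $p$ (which establishes the claimed equality of the two descriptions of $Z_\tau$), and then to use the inequalities of Propositions \ref{mink} and \ref{holder} to verify the ideal properties.

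For the upgrade, the inclusion $\supseteq$ is immediate since $p=2$ is even. For the other direction, suppose $\Vert a\Vert_2=0$. I would apply the Cauchy--Schwarz inequality---which is exactly the case $n=2$ of Lemma \ref{cs}, noting that $\varphi(2)=1$---in the form
$$|\tau(y^*x)|^2 \leq \tau(x^*x)\,\tau(y^*y).$$
Taking $x=a$ and $y=a(a^*a)^{n-1}$ gives $y^*x=(a^*a)^n$ and $y^*y=(a^*a)^{2n-1}$, so
$$|\tau((a^*a)^n)|^2 \leq \Vert a\Vert_2^2\cdot \tau((a^*a)^{2n-1}) = 0,$$
and hence $\Vert a\Vert_{2n}^{2n}=\tau((a^*a)^n)=0$ for every $n\geq 1$.

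With the second description of $Z_\tau$ in hand, closure under the algebraic operations is automatic. Closure under addition follows from Proposition \ref{mink}: for any even $p$, $\Vert a+b\Vert_p\leq \Vert a\Vert_{\psi(p)}+\Vert b\Vert_{\psi(p)}=0$ whenever $a,b\in Z_\tau$, since $\psi(p)$ is an even positive integer. Closure under left and right multiplication by arbitrary $c\in A$ follows from Proposition \ref{holder}: $\Vert ac\Vert_p\leq \Vert a\Vert_{\varphi(p+4)}\Vert c\Vert_{\varphi(p+4)}=0$ (and $\Vert ca\Vert_p=\Vert a^*c^*\Vert_p$ reduces to the same estimate via $\Vert \cdot\Vert_p=\Vert (\cdot)^*\Vert_p$); closure under the involution is immediate from $\Vert a^*\Vert_p=\Vert a\Vert_p$.

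Finally, the vanishing of $\tau$ on $Z_\tau$ is another Cauchy--Schwarz: $|\tau(a)|^2=|\tau(1\cdot a)|^2\leq \tau(1)\,\tau(a^*a)=0$. Consequently $\tau$ descends to a positive trace $\bar\tau$ on the $*$-algebra $A/Z_\tau$, and $\bar\tau$ is faithful by the very construction of the quotient. The only conceptually non-routine step is the initial equivalence of the two descriptions of $Z_\tau$: the norms $\Vert \cdot\Vert_p$ are not a priori monotone in $p$ in the unbounded setting, so the Cauchy--Schwarz trick above---propagating the single vanishing $\Vert a\Vert_2=0$ to all even exponents at once---is the essential ingredient.
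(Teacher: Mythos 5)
Your proposal is correct and follows essentially the same route as the paper: a Cauchy--Schwarz factorization of $(a^*a)^{p/2}$ with one factor equal to $a$ to propagate $\Vert a\Vert_2=0$ to all even $p$ (the paper peels off a single $a$ to get $\Vert a\Vert_p^p\leq \tau(xx^*)^{1/2}\Vert a\Vert_2$, while you take $x=a$, $y=a(a^*a)^{n-1}$ --- the same trick), followed by Propositions \ref{mink} and \ref{holder} for the ideal properties and one more Cauchy--Schwarz for the vanishing of $\tau$ on $Z_\tau$. No gaps.
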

\begin{proof}
For any even $p$ and any $a\in A$ we have as a consequence of the usual Cauchy-Schwarz inequality \begin{align*} \Vert a\Vert_p^p = \tau( (a^*a)^{\frac{p}{2}}) \leq \tau (\underbrace{ a^*a\cdots a^*}_{p-1} \underbrace{ a^*a\cdots a^*}_{p-1})^{\frac12} \Vert a\Vert_2.\end{align*}  This shows that $Z_{\tau}$ is the set of elements with $\Vert a\Vert_p=0$ for all  even $p$, and with the above results $Z_{\tau}$ is then easily seen to be a two-sided $*$-ideal. From the usual Cauchy-Schwarz inequality we see that $\tau$ vanishes on $Z_{\tau}$. The rest is clear.
\end{proof}

\begin{definition} We call the algebra $A/Z_{\tau}$, with trace induced by $\tau$, the \textit{trace-reduction} of $(A,\tau)$.
\end{definition}

\begin{remark}\label{red}
It is clear that the trace-reduction $A/Z_{\tau}$ of a tracial $*$-algebra $(A,\tau)$ is trace-reduced.  Note that on a trace-reduced tracial $*$-algebra,  $\langle a,b\rangle := \tau(a^*b)$ defines an inner product. Thus $\Vert\cdot\Vert_2$ is a norm.

Let $\varphi\colon (A,\tau)\rightarrow (B,\rho)$ be a homomorphism of tracial $*$-algebras, i.e. a $*$-algebra homomorphism that fulfills $\rho(\varphi(a))=\tau(a)$ for all $a\in A$. Then $\Vert \varphi(a)\Vert_p=\Vert a\Vert_p$ holds for all $a\in A$. So $\ker\varphi\subseteq Z_{\tau},$ and if $(A,\tau)$ is  trace-reduced, then $\varphi$ is injective. 
Any $\varphi$ induces a tracial $*$-algebra embedding on the trace-reductions $A/Z_{\tau}\rightarrow B/Z_{\rho}.$
\end{remark}

Let $(A,\tau)$ be a trace-reduced tracial $*$-algebra.
It is clear that Theorem \ref{ineq} is telling us that the topology which is induced by the $p$-norms is compatible with addition and multiplication. Indeed, the sets
$$V_{n,p} := \left\{a \in A \mid \|a\|_p < \frac1n \right\}$$
form a countable subbasis of a vector space topology on $A$.
Hence, we obtain a metrizable topology on $A$ which endows $A$ with the structure of a metrizable topological algebra. We say that a sequence $(a_n)_{n \in \N}$ is a Cauchy sequence in $A$ if $\|a_n - a_m\|_p \to 0$ for all even $p$ as $n,m \to \infty$.
Moreover, a trace-reduced tracial $*$-algebra $A$ is \emph{complete} if every Cauchy sequence in $A$ has a limit in $A$.

It is easy to check that Theorem \ref{ineq} implies that every trace-reduced tracial $*$-algebra has a natural completion to a complete trace-reduced tracial $*$-algebra, which is defined as usual to be the space of Cauchy-sequences modulo null-sequences. 

\vspace{0.1cm}

We will also need the following observations:
\begin{lemma}\label{nil} A trace-reduced tracial $*$-algebra $A$ does not contain nilpotent left-ideals other than $\{0\}$.
\end{lemma}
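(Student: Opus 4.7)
The plan is to take an arbitrary left ideal $L \subseteq A$ with $L^n = 0$ for some $n \geq 1$, pick $a \in L$, and derive $a = 0$ by iteratively applying the trace-reduced hypothesis. The key observation is that since $L$ is a left ideal, for any $a \in L$ we have $a^*a = a^* \cdot a \in A \cdot L \subseteq L$. Consequently the $n$-fold product $(a^*a)^n$, regarded as a product of $n$ elements of $L$, lies in $L^n = \{0\}$, so $(a^*a)^n = 0$ in $A$.

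Next I would push this down to $a^*a$ itself by a standard squaring trick. Choose $N$ with $2^N \geq n$ and note that $(a^*a)^{2^N}$ is a right-multiple of $(a^*a)^n$, hence also vanishes. Define $b_k := (a^*a)^{2^k}$ for $0 \leq k \leq N$; each $b_k$ is self-adjoint, $b_N = 0$, and $b_{k+1} = b_k \cdot b_k = b_k^* b_k$. Therefore whenever $b_{k+1} = 0$ we get
\[
\|b_k\|_2^2 = \tau(b_k^* b_k) = \tau(b_{k+1}) = 0,
\]
and trace-reducedness forces $b_k = 0$. A descending induction yields $b_0 = a^*a = 0$, and a final application of trace-reducedness to $\tau(a^*a) = 0$ gives $a = 0$. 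Since $a \in L$ was arbitrary, $L = \{0\}$.

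There is really no genuine obstacle here — the whole argument is three lines once one observes that $L$ being a \emph{left} ideal (rather than merely a subalgebra) is exactly what is needed to ensure $a^*a \in L$, which in turn feeds the squaring chain into the trace. The only mild subtlety is not confusing $L^n = 0$ (product of $n$ elements) with $a^n = 0$; the former is stronger and is what allows the use of $a^*a$ rather than $a$, which is crucial because it is $\tau(a^*a)$, not $\tau(a^n)$, that detects nonvanishing via the trace.
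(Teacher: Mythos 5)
Your proof is correct and follows essentially the same route as the paper: show $a^*a$ lies in the nilpotent left ideal, hence some power $(a^*a)^{2^N}$ vanishes, and then descend via faithfulness of $\tau$ applied to $\tau(b_k^*b_k)$ at each stage. You merely spell out explicitly the descending induction that the paper's proof compresses into ``this immediately implies $x=0$.''
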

\begin{proof} Let $I$ be a nilpotent left-ideal and $x\in I$. Then $x^*x\in I$, and thus $(x^*x)^{2^n}=0$ for some $n\geq 1$. So $\tau\left( (x^*x)^{2^n}\right)=0$, and since $\tau$ is faithful, this immediately implies $x=0$. 
\end{proof}

\begin{corollary} \label{embed}
Let $(A,\tau)$ be a trace-reduced tracial $*$-algebra which satisfies the polynomial identity $j_k$. Then $A$ embeds into a $k \times k$-matrix algebra over a commutative ring.\end{corollary}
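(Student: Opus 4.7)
The plan is to exhibit $A$ as a subdirect product $\prod_P A/P$ over all prime ideals $P$ of $A$, and then to use Posner's theorem together with Amitsur--Levitzki to embed each prime factor into a $k\times k$-matrix algebra over a field. Combining, one obtains an embedding of $A$ into $M_k$ over the product of these fields, which is commutative. The main obstacle is verifying that the intersection $N := \bigcap_{P} P$ of all prime ideals of $A$ vanishes, since \emph{a priori} $N$ is only a nil ideal, and Lemma \ref{nil} excludes only \emph{nilpotent} left-ideals.

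For the first step, I observe that $N$ is a two-sided ideal closed under the involution: if $P$ is prime then so is $P^*$, and $P\mapsto P^*$ permutes the primes, so $\bigcap_P P = \bigcap_P P^*$. Every element of $N$ is nilpotent, as is standard for the intersection of all prime ideals. Given $x\in N$, the element $y:=x^*x\in N$ is self-adjoint and nilpotent, so $y^m=0$ for some $m$. Choosing $\ell$ with $2^\ell\geq m$, the element $y^{2^{\ell-1}}$ is self-adjoint and satisfies $(y^{2^{\ell-1}})^*y^{2^{\ell-1}}=y^{2^\ell}=0$, so faithfulness of $\tau$ forces $y^{2^{\ell-1}}=0$. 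Iterating this halving of the exponent gives $y=x^*x=0$, and hence $x=0$ by a final application of faithfulness. Therefore the natural map $A\to\prod_P A/P$ is injective.

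For the second step, fix a prime $P$. The quotient $A/P$ is a prime PI-algebra still satisfying $j_k$ (by Remark \ref{eval} or just because $j_k$ passes to quotients). By Posner's theorem (recalled in Section \ref{polid}), $A/P$ embeds into its classical ring of fractions $M_{r_P}(D_P)$, where $D_P$ is a division ring of dimension $d_P^2$ over its center $Z_P$; since an algebra and its classical ring of fractions satisfy the same polynomial identities, $M_{r_P}(D_P)$ also satisfies $j_k$. As $j_k$ is multilinear, extending scalars to an algebraic closure $L_P$ of $Z_P$ preserves the identity, so $M_{r_Pd_P}(L_P)$ satisfies $j_k$. The Amitsur--Levitzki Theorem \ref{infinite} now forces $r_Pd_P\leq k$, producing a (not necessarily unital) embedding $A/P\hookrightarrow M_k(L_P)$ by padding with zeros.

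Assembling everything, the composition
$$A\hookrightarrow \prod_P A/P\hookrightarrow \prod_P M_k(L_P)\cong M_k\Bigl(\prod_P L_P\Bigr)$$
realizes $A$ inside the $k\times k$-matrix algebra over the commutative ring $R:=\prod_P L_P$, as claimed. The delicate point is clearly the vanishing of $N$: this is precisely where the trace-faithful hypothesis, through the self-adjointness of $x^*x$ and the iterated squaring argument, is essential, since for general PI-algebras the prime radical can be a nonzero nil ideal.
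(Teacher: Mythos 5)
Your proof is correct, but it takes a genuinely different route from the paper's. The paper argues via \emph{maximal} ideals: it invokes Braun's theorem \cite{br} that the Jacobson radical of a finitely generated PI-algebra is nilpotent, kills that radical with Lemma \ref{nil}, embeds $A$ into $\prod_m A/m$ over all maximal ideals, and then uses the structure theory of simple PI-algebras (via \cite{row}, Corollary 1.6.7) to place each $A/m$ inside $M_k$ of a field. You instead work with \emph{prime} ideals: the prime radical is nil in any ring, and you eliminate it by a direct iterated-squaring argument from faithfulness of $\tau$ (in effect a strengthening of Lemma \ref{nil} from nilpotent ideals to nil $*$-ideals, which is exactly the right strengthening since \emph{a priori} the prime radical need not be nilpotent); you then control each prime quotient by Posner's theorem, scalar extension of the multilinear identity $j_k$, and Amitsur--Levitzki (Theorem \ref{infinite}) to bound the matrix size by $k$. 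Your route buys independence from Braun's theorem and hence from any finite-generation hypothesis --- which is a genuine gain, since the corollary as stated does not assume $A$ finitely generated, whereas the nilpotency of the Jacobson radical recalled in Section \ref{polid} is only stated for finitely generated PI-algebras. The paper's route is shorter given the cited machinery and lands on simple quotients, where Kaplansky-type results apply immediately. All the individual steps you use (prime radical is nil, preservation of multilinear identities under central localization and scalar extension, $\prod_P M_k(L_P)\cong M_k(\prod_P L_P)$, and the harmless non-unital corner embedding) are sound.
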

\begin{proof} We already recalled the fact that the Jacobson radical of $A$ is nilpotent. By Lemma \ref{nil} it is trivial. The embedding result is now \cite{row}, Corollary 1.6.7. Concretely, there exists an embedding
$$\iota \colon A \hookrightarrow \prod_{m \subset A} A/m,$$
where $m$ runs through all maximal ideals. Indeed, the Jacobson ideal agrees with the intersection of all maximal ideals of $A$ and hence $\iota$ is injective. Now, $A/m$ is simple and satisfies the polynomial identity $j_k$. Hence, $A/m$ is isomorphic to a subring of the $k \times k$-matrices over a field $k_m$. We may consider the commutative $\C$-algebra $B=\prod_{m \subset A} k_m$ and see that
$A$ is now realized as a $*$-subalgebra of $M_k(B)$. This finishes the proof.
\end{proof}

\subsection{Metric ultraproducts of tracial $*$-algebras} 


We are now concerned with the notion of ultraproduct of tracial $*$-algebras. We will see that even though the definition is reasonable and natural, it has its pathologies. One of the unusual features is that the tracial ultraproduct of a sequence of tracial $*$-algebras may consist only of multiples of identity.

Let $I$ be an index set and $(A_i,\tau_i)$ a tracial $*$-algebra, for each $i\in I$. Most of the time, $I$ will be just the set $\N$. Define $$\widetilde{\prod}_{i\in I}^{} (A_i,\tau_i)=\left\{ (a_i)_{i\in I}\in\prod_{i\in I}A_i \mid  (\Vert a_i \Vert_p)_{i\in I} \mbox{ is bounded, for all even } p \right\}.$$ Here, $\Vert a_i \Vert_p= \tau_i\left( (a_i^*a_i)^{\frac{p}{2}}\right)^{\frac1p},$ as in the last section. Using the H\"older and Minkowski type inequalities from the last section, $\widetilde{\prod}_{i\in I}^{}(A_i,\tau_i)$ is easily seen to be a $*$-subalgebra of the product algebra.

Now let $\omega$ be an ultrafilter on $I$. For $x=(a_i)_{i\in I}\in \widetilde{\prod}_{i\in I}^{}(A_i,\tau_i)$ we define  $$\tau_\omega(x) := \lim_{i \to \omega} \tau_i(a_i). $$ Since $\vert\tau_i(a_i)\vert \leq \Vert a_i \Vert_2,$ and by the definition of $\widetilde{\prod}_{i\in I}^{}(A_i,\tau_i)$, this is a well defined mapping $$\tau_\omega\colon \widetilde{\prod}_{i\in I}^{}(A_i,\tau_i)\rightarrow \C.$$ One easily verifies that $\tau_\omega$ is even a trace. For the induced $p$-norms we find $$\Vert x\Vert _p = \tau_{\omega}\left( (x^*x)^{\frac{p}{2}}\right)^{\frac1p}= \lim_{i \to \omega} \tau_i\left( (a_i^*a_i)^{\frac{p}{2}}\right)^{\frac1p}=\lim_{i \to \omega} \Vert a_i \Vert_p.$$ So consider $$Z_{\tau_\omega}:=\left\{ x\in \widetilde{\prod}_{i\in I}^{}(A_i,\tau_i)\mid \Vert x\Vert_2 =0 \right\},$$ which is a two-sided $*$-ideal in $\widetilde{\prod}_{i\in I}^{}(A_i,\tau_i)$ by Corollary \ref{ideal1}.

\begin{definition} We call the quotient $*$-algebra $$\prod_{i\to \omega}(A_i,\tau_i):=\left(\widetilde{\prod}_{i\in I}^{}(A_i,\tau_i)\right)/ Z_{\tau_\omega}$$ the \textit{metric ultraproduct}  with respect to the ultrafilter $\omega$ of the family $\left((A_i,\tau_i)\right)_{i\in I}$. We equip it with the trace induced by $\tau_{\omega}$. 
\end{definition}

\begin{remark}\label{funkt}
The construction is functorial in the following sense: if $\varphi_i\colon (A_i,\tau_i)\rightarrow (B_i,\rho_i)$ are homomorphism of tracial $*$-algebras for all $i\in I,$ then they induce a canonical homomorphism of tracial $*$-algebras $\varphi\colon \prod_{i\to \omega} (A_i,\tau_i) \rightarrow \prod_{i\to \omega} (B_i,\rho_i).$ The usual functorial properties are fulfilled. $\varphi$ is injective, since $\prod_{i\to \omega} (A_i,\tau_i)$ is trace-reduced, see Remark \ref{red}. A sufficient condition for $\varphi$ being surjective is for example the following: $$\{i\in I\mid \varphi_i \mbox{ is surjective } \}\in \omega.$$ This is easily checked.
So for example replacing each $A_i$ by its trace-reduction $A_i/Z_{\tau_i}$ does not change the metric ultraproduct.
\end{remark}

\begin{lemma}
The metric ultraproduct of a sequence of tracial $*$-algebras is always trace-reduced and complete.
\end{lemma}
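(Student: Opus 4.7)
Trace-reducedness is built into the construction: if $[x]\in \prod_{i\to\omega}(A_i,\tau_i)$ satisfies $\|[x]\|_2=0$, then any representative $(a_i)_i$ has $\lim_{i\to\omega}\|a_i\|_2=0$, hence lies in $Z_{\tau_\omega}$, so $[x]=0$. My plan for this half is simply to unwind the definitions.

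For completeness, the strategy is a diagonal argument. Given a Cauchy sequence $([x^{(n)}])_n$ in the ultraproduct, I would first pass to a subsequence with $\|[x^{(n+1)}]-[x^{(n)}]\|_p\leq 2^{-n}$ for every even $p\leq 2n$, then upgrade these ultraproduct-level bounds to pointwise-in-$i$ bounds at the level of representatives, and finally take a diagonal to produce the limit. The upgrade step is the heart of the construction: given a representative $(b_i^{(n)})_i$ of $[x^{(n)}]$ and any representative $(a_i^{(n+1)})_i$ of $[x^{(n+1)}]$, the set
$$U_n := \{i\in\N : \|a_i^{(n+1)}-b_i^{(n)}\|_p\leq 2^{-n+1}\text{ for every even } p\leq 2n\}$$
is a finite intersection of sets in $\omega$ and so lies in $\omega$. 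Setting $b_i^{(n+1)} := a_i^{(n+1)}$ for $i\in U_n$ and $b_i^{(n+1)}:=b_i^{(n)}$ otherwise yields a new representative of $[x^{(n+1)}]$ (the modification lies in $Z_{\tau_\omega}$) for which the pointwise estimate $\|b_i^{(n+1)}-b_i^{(n)}\|_p\leq 2^{-n+1}$ holds for every $i\in\N$ and every even $p\leq 2n$. A diagonal $c_i := b_i^{(n(i))}$, for a suitable $n(i)\to\infty$ along $\omega$, should then give an element $(c_i)_i\in \widetilde{\prod}_{i\in\N}(A_i,\tau_i)$ whose class is the sought-after limit.

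The main technical obstacle is the final telescoping estimate for $\|c_i-b_i^{(N)}\|_p$, because the classical triangle inequality is not available in our setting. Theorem \ref{ineq} only yields $\|a+b\|_p\leq \|a\|_{\psi(p)}+\|b\|_{\psi(p)}$ with $\psi(p)>p$, so iterating across the $n(i)-N$ summands in the telescoping sum forces each term to be controlled at an inflated norm-index $\psi^{n(i)-N-1}(p)$. I would handle this by coordinating the rate of subsequence extraction and the growth of $n(i)$: in the subsequence step I would actually arrange $\|[x^{(n+1)}]-[x^{(n)}]\|_{P_n}\leq 2^{-n}$ for a rapidly growing choice of $P_n$ tailored to the iteration of $\psi$, and let $n(i)$ grow along $\omega$ slowly enough that the norm-index arising in the telescoping sum stays below $P_n$ throughout. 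Once this coordination is in place, the remaining verifications, that $(c_i)_i$ belongs to $\widetilde{\prod}$ and that $\|[c]-[x^{(N)}]\|_p\to 0$ as $N\to\infty$ for every even $p$, are routine, and Cauchyness of the original sequence then upgrades convergence of the subsequence to convergence of the full sequence.
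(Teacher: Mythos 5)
Your approach is the same as the paper's, which for this lemma only says that trace-reducedness holds by construction and that completeness follows from ``a standard diagonalization argument''; you are supplying the details the paper omits, and the first half and the overall architecture of the second half (subsequence extraction, upgrading ultraproduct estimates to pointwise estimates on an $\omega$-large set, diagonalizing) are fine.

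The one step that does not close as literally written is the telescoping estimate. You say each term of $c_i-b_i^{(N)}=\sum_{n=N}^{n(i)-1}\bigl(b_i^{(n+1)}-b_i^{(n)}\bigr)$ must be controlled at the inflated index $\psi^{n(i)-N-1}(p)$, and you propose to keep this below $P_n$ by letting $n(i)$ grow slowly. But $n(i)$ must be unbounded along $\omega$ (otherwise $[c]$ is just one of the $[x^{(M)}]$), so for fixed $N$ and $p$ the requirement $P_N\geq\psi^{n(i)-N-1}(p)$ fails on an $\omega$-large set of $i$; no choice of the $P_n$ made in advance can absorb an exponent that depends on $i$. The repair is to make the inflation depend on the summand rather than on the number of summands: peeling off the earliest term first in the iterated two-term inequality estimates the $n$-th difference at index $\psi^{n-N+1}(p)$, independent of $n(i)$, so it suffices to choose $P_n$ growing like an $n$-fold iterate of $\psi$ and to prove convergence of $\|[c]-[x^{(N)}]\|_p$ only for $N$ large relative to $p$. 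Cleaner still, the proof of Proposition \ref{mink} gives directly an $m$-term Minkowski inequality $\|\sum_{j=1}^m a_j\|_p\leq\sum_{j=1}^m\|a_j\|_{\psi(p)}$ with the \emph{same} $\psi(p)$ for every $m$: expand $\tau\bigl((\sum_{j,k}a_j^*a_k)^{p/2}\bigr)$ via Lemma \ref{sum} and apply Proposition \ref{holder} to each $\|a_j^*a_k\|_{2\varphi(p/2)}$. With that, a single application of $\psi$ handles the whole telescoping sum and your coordination of $P_n$ and $n(i)$ becomes unnecessary; the rest of your argument then goes through as you describe.
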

\begin{proof}
It is trace-reduced since we defined it to be the trace-reduction of a tracial $*$-algebra. A standard diagonalization argument shows that the metric ultraproduct is also complete.
\end{proof}

\begin{example}\label{emb} Let $A$ be a $*$-algebra and let $\tau, \tau_i$ for $i\in \N$ be traces on $A$. Assume $\lim_{i\to\infty} \tau_i =\tau$, pointwise on $A$. Consider the diagonal embedding $$\iota\colon A\rightarrow\widetilde{\prod}_{i\in I} (A,\tau_i); \quad a\mapsto (a)_{i\in I}.$$ Since $\tau_i((a^*a)^{\frac{p}{2}})\to \tau((a^*a)^{\frac{p}{2}})$ for fixed $p$ and $a$, each diagonal sequence is indeed bounded with respect to $\Vert\cdot\Vert_p$. Fix an ultrafilter $\omega$ on $\N$ which is not principal. Then $\iota$ is a tracial $*$-algebra homomorphism, when $A$ is equipped with $\tau$. Thus we get a morphism of tracial $*$-algebras $$(A,\tau) \rightarrow \prod_{i\to \omega} (A,\tau_i).$$
\end{example}

\section{Embedding theorems}
We can now state the approximation results from Theorems \ref{main1} in a more conceptual way, as embedding results.

\begin{theorem}  \label{main}
Let $(A,\tau)$ be a tracial $*$-algebra, generated by $n$ hermitean elements. Then there is a tracial $*$-algebra homomorphism into a metric ultraproduct $$\varphi\colon (A,\tau)\rightarrow \prod_{k\to \omega} (A_{n,k},\tau_k)$$ on the index set $\N$, where $A_{n,k}$ is the algebra of $n$ generic self-adjoint $k\times k$-matrices and $\tau_k \colon A_{n,k} \to \C$ is a complex-linear functional which makes $(A_{n,k},\tau_k)$ into a tracial $*$-algebra for each $k \in \N$.
\end{theorem}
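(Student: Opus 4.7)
The plan is to reduce everything to Theorem \ref{main1} by first pulling the trace $\tau$ back to the free algebra, approximating it there, and then pushing the resulting data forward to the generic matrix algebras.

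Concretely, I would fix hermitian generators $a_1,\dots,a_n$ of $A$ and let $\pi\colon \C\langle X\rangle\to A$ be the surjective $*$-algebra homomorphism sending $X_i\mapsto a_i$. Setting $\widetilde\tau := \tau\circ\pi$ gives a positive trace on $\C\langle X\rangle$, so Theorem \ref{main1} produces positive traces $\widetilde\tau_k$ on $\C\langle X\rangle$ with $\widetilde\tau_k\to\widetilde\tau$ pointwise and $\widetilde\tau_k(J_k)=0$. Because $\widetilde\tau_k$ vanishes on $J_k$ and is tracial and positive, it descends to a positive trace $\tau_k$ on the quotient $A_{n,k}=\C\langle X\rangle/J_k$, making $(A_{n,k},\tau_k)$ into a tracial $*$-algebra. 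Let $q_k\colon \C\langle X\rangle\to A_{n,k}$ denote the quotient map.

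Next I would construct the diagonal map at the level of the free algebra. For every even $p$ and every $P\in\C\langle X\rangle$,
\[
\|q_k(P)\|_p^p = \tau_k\!\left((q_k(P)^*q_k(P))^{p/2}\right) = \widetilde\tau_k\!\left((P^*P)^{p/2}\right) \longrightarrow \widetilde\tau\!\left((P^*P)^{p/2}\right),
\]
so $(q_k(P))_{k\in\N}$ lies in $\widetilde\prod_k(A_{n,k},\tau_k)$. Passing to the metric ultraproduct along $\omega$ yields a $*$-algebra homomorphism
\[
\Phi\colon \C\langle X\rangle \longrightarrow \prod_{k\to\omega}(A_{n,k},\tau_k), \qquad P\mapsto [(q_k(P))_k],
\]
and the computation above (for $p=2$) together with the definition of $\tau_\omega$ shows $\tau_\omega\circ\Phi = \widetilde\tau = \tau\circ\pi$, so $\Phi$ is a tracial homomorphism when $\C\langle X\rangle$ carries the trace $\widetilde\tau$. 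This is essentially the diagonal construction of Example \ref{emb} composed with the functorial map from Remark \ref{funkt} induced by the quotients $q_k$.

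It then remains to check that $\Phi$ factors through $\pi$; this is the step that uses the approximation most directly. For $P\in\ker\pi$ and any even $p$,
\[
\|q_k(P)\|_p^p = \widetilde\tau_k\!\left((P^*P)^{p/2}\right)\longrightarrow \tau\!\left((\pi(P)^*\pi(P))^{p/2}\right)=0,
\]
so in particular $\lim_{k\to\omega}\|q_k(P)\|_2=0$, i.e.\ $\Phi(P)=0$ in the metric ultraproduct. Hence $\Phi$ descends to the desired tracial $*$-algebra homomorphism $\varphi\colon (A,\tau)\to \prod_{k\to\omega}(A_{n,k},\tau_k)$. The main conceptual work is already done in Theorem \ref{main1}; the only mild subtlety here is verifying that the $p$-norms along the diagonal are controlled uniformly in $k$ so that the sequence actually lies in the algebraic ultraproduct $\widetilde\prod_k(A_{n,k},\tau_k)$, and this is handled uniformly by the pointwise convergence of $\widetilde\tau_k$ applied to the scalars $(P^*P)^{p/2}$.
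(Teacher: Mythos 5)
Your proposal is correct and follows essentially the same route as the paper: pull $\tau$ back to $\C\langle X\rangle$, apply Theorem \ref{main1} to get traces vanishing on $J_k$, form the diagonal map into the ultraproduct of the quotients $A_{n,k}=\C\langle X\rangle/J_k$ (Example \ref{emb} plus Remark \ref{funkt}), and observe that $\ker\pi$ is killed because its elements have vanishing $2$-norm in the limit. The paper phrases this last step via Remark \ref{red} (i.e.\ $\ker\pi\subseteq Z_{\tau'}$ and factorization through the trace-reduction), but the content is identical to your explicit computation.
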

\begin{proof} First note that we can assume that $A=\C\langle X\rangle$ for $X = \{X_1,\dots,X_n\}$. Indeed choose a surjective $*$-homomorphism $\pi\colon \C\langle X\rangle \rightarrow A$, and pull back the trace $\tau$ to a trace $\tau'$ on  $\C\langle X\rangle$. Then $\ker \pi\subseteq Z_{\tau'}$, see Remark \ref{red}. So we obtain a surjection $A \twoheadrightarrow \C\langle X\rangle/Z_{\tau'},$ and any trace-preserving homomorphism from $\C\langle X\rangle$ to an ultraproduct factors through $\C\langle X\rangle/Z_{\tau'}$, again by Remark \ref{red}.

But for $A=\C\langle X\rangle$ we have shown that $\tau$ can be approximated by traces $\tau_k$ that vanish on $J_k$, see Theorem \ref{main1}. So as described in Example \ref{emb} we get a tracial $*$-algebra homomorphism $$(\C\langle X\rangle,\tau)\rightarrow \prod_{k\to \omega} (\C\langle X\rangle, \tau_k)$$

But since $\tau_k$ vanishes on $J_k$ we can replace  $(\C\langle X\rangle,\tau_k)$ by $A_{n,k}=\C\langle X\rangle /J_k$ with the induced trace, without changing the ultraproduct (see Remark \ref{funkt}). This finishes the proof.
\end{proof}

\begin{theorem} \label{mainc}Let $(A,\tau)$ be a countably generated tracial $*$-algebra. Then there is a tracial $*$-algebra homomorphism into a metric ultraproduct $$\varphi\colon (A,\tau)\rightarrow \prod_{k\to \omega} (A_{k},\tau_k),$$ where $A_{k}$ is the algebra of generic self-adjoint $k\times k$-matrices.
\end{theorem}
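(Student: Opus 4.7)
The plan is to follow the structure of the proof of Theorem \ref{main} essentially line by line, using the countable-variable versions of all the ingredients.

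First I would reduce to the case $A = \C\langle X_\infty\rangle$. Since $A$ is countably generated, writing each generator as $h+ik$ with $h,k$ hermitian shows that $A$ is countably generated by hermitian elements, so there is a surjective $*$-homomorphism $\pi\colon \C\langle X_\infty\rangle \twoheadrightarrow A$. Pulling back $\tau$ yields a trace $\tau' := \tau\circ\pi$ on $\C\langle X_\infty\rangle$ with $\ker\pi \subseteq Z_{\tau'}$ (Remark \ref{red}). Any trace-preserving homomorphism from $\C\langle X_\infty\rangle$ into a trace-reduced algebra factors through $A$, so it suffices to construct such a map out of $(\C\langle X_\infty\rangle,\tau')$.

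Next I would produce, for each $k$, a positive trace $\tau_k$ on $\C\langle X_\infty\rangle$ that vanishes on $J_{k,\infty}$, with the sequence converging pointwise to $\tau$. This is exactly the content of Remark \ref{infset}: apply Theorem \ref{main1} to the finite-variable trace $\tau|_{\C\langle X_1,\ldots,X_k\rangle}$ to obtain a trace $\sigma_k$ on $\C\langle X_1,\ldots,X_k\rangle$ that vanishes on $J_k$ and approximates $\tau$ within $1/k$ on the finite-dimensional subspace of polynomials of degree at most $k$; then pull back along the canonical projection $\pi_k\colon \C\langle X_\infty\rangle \twoheadrightarrow \C\langle X_1,\ldots,X_k\rangle$. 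Since $\pi_k$ is obtained by setting $X_{k+1},X_{k+2},\ldots$ to zero (a valid specialization), it sends $J_{k,\infty}$ into $J_k$, and hence $\tau_k := \sigma_k\circ\pi_k$ vanishes on $J_{k,\infty}$. Because any fixed element of $\C\langle X_\infty\rangle$ involves only finitely many variables and has bounded degree, the sequence $(\tau_k)$ converges pointwise to $\tau$.

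With this sequence in hand, Example \ref{emb} supplies a trace-preserving $*$-homomorphism
$$(\C\langle X_\infty\rangle,\tau) \longrightarrow \prod_{k\to\omega}(\C\langle X_\infty\rangle,\tau_k)$$
for any non-principal ultrafilter $\omega$ on $\N$. Since each $\tau_k$ vanishes on $J_{k,\infty}$, it descends to a trace on the quotient $A_k=\C\langle X_\infty\rangle/J_{k,\infty}$, and the resulting surjections $(\C\langle X_\infty\rangle,\tau_k) \twoheadrightarrow (A_k,\tau_k)$ induce an isomorphism on the metric ultraproduct by Remark \ref{funkt}. Composing all of this gives the desired $\varphi\colon (A,\tau)\to \prod_{k\to\omega}(A_k,\tau_k)$.

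The only point requiring any real care is the diagonalization in the second step, where one must simultaneously arrange pointwise convergence on \emph{all} of $\C\langle X_\infty\rangle$ (rather than merely on a fixed $\C\langle X_1,\ldots,X_n\rangle$) and the property that $\tau_k$ vanishes on the full ideal $J_{k,\infty}$ rather than on a finite-variable truncation of it. Both hinge on the single observation that the projection $\pi_k$ carries $J_{k,\infty}$ into $J_k$, combined with the fact that any element of $\C\langle X_\infty\rangle$ uses only finitely many letters.
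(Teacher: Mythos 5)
Your proposal is correct and follows exactly the route the paper takes (the paper's proof of Theorem \ref{mainc} simply says ``same as Theorem \ref{main}, reducing to $\C\langle X_\infty\rangle$ and applying Remark \ref{infset}''). You have merely filled in the details of the diagonalization in Remark \ref{infset} --- in particular the observation that the projection onto $\C\langle X_1,\dots,X_k\rangle$ carries $J_{k,\infty}$ into $J_k$ --- which the paper leaves implicit.
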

\begin{proof} The proof is exactly the same as before, reducing to $A=\C\langle X_{\infty}\rangle$ first and applying Remark \ref{infset}. \end{proof}

\begin{corollary} \label{coroneumann}
Let $(A,\tau)$ be a trace-reduced tracial $*$-algebra and suppose that $A$ admits a countably generated dense subalgebra. Then there exist trace-reduced tracial $*$-algebras $(A_n,\tau_n)$ of type I$_{\leq n}$ and a trace-preserving embedding into a metric ultraproduct
$$\iota \colon (A,\tau) \hookrightarrow \prod_{n \to \omega} (A_n,\tau_n).$$
\end{corollary}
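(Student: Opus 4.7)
The plan is to apply Theorem~\ref{mainc} to a countably generated dense $*$-subalgebra of $A$ and then extend the resulting homomorphism to all of $A$ by continuity. Let $B \subseteq A$ be a countably generated $*$-subalgebra which is dense with respect to the metric topology induced by the $p$-norms. Theorem~\ref{mainc}, applied to the tracial $*$-algebra $(B,\tau|_B)$, produces a trace-preserving $*$-algebra homomorphism
$$\varphi_0 \colon (B,\tau|_B) \to \prod_{k\to\omega}(A_k,\tau_k),$$
where $A_k = \C\langle X_\infty \rangle / J_{k,\infty}$ is the algebra of generic self-adjoint $k\times k$-matrices.

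Before extending, I would pass to the trace-reductions. By Remark~\ref{funkt}, replacing each $(A_k,\tau_k)$ by its trace-reduction $(A_k/Z_{\tau_k},\bar\tau_k)$ leaves the metric ultraproduct unchanged. Since $j_k \in J_{k,\infty}$ by the Amitsur--Levitzki Theorem~\ref{infinite}, the algebra $A_k$ satisfies the identity $j_k$, and this identity is inherited by the quotient $A_k/Z_{\tau_k}$. Hence each $(A_k/Z_{\tau_k},\bar\tau_k)$ is a trace-reduced tracial $*$-algebra of type~I$_{\leq k}$, which after relabeling provides the $(A_n,\tau_n)$ demanded by the statement.

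The main work is the extension of $\varphi_0$ from $B$ to $A$. Because $\varphi_0$ is trace-preserving, $\Vert\varphi_0(b)\Vert_p = \Vert b\Vert_p$ for every even $p$ and every $b\in B$, so $\varphi_0$ is uniformly continuous in the metric topology. Given $a \in A$, choose a sequence $(b_n)_{n\in\N}$ in $B$ converging to $a$; then $(\varphi_0(b_n))_n$ is Cauchy in the metric ultraproduct, which is complete, so we may set $\iota(a)$ to be its limit. Independence of the approximating sequence follows from the $\Vert\cdot\Vert_2$-isometry of $\varphi_0$, and the fact that $\iota$ is again a trace-preserving $*$-algebra homomorphism follows from joint continuity of addition, multiplication and involution on trace-reduced tracial $*$-algebras, which is guaranteed by Theorem~\ref{ineq}.

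Since $(A,\tau)$ is trace-reduced, $\Vert\cdot\Vert_2$ is a genuine norm on $A$, and $\iota$ is a $\Vert\cdot\Vert_2$-isometry, so $\iota$ is injective, completing the proof. The most delicate point is the continuity argument that transports multiplication through the limit: the mixed-exponent bound $\Vert ab\Vert_p \leq \Vert a\Vert_{q'}\Vert b\Vert_{q'}$ of Theorem~\ref{ineq} needs $\Vert\cdot\Vert_p$-Cauchy sequences in $B$ to be $\Vert\cdot\Vert_{q'}$-Cauchy as well, but this is automatic because the metric topology on any trace-reduced tracial $*$-algebra is generated by the full family of $p$-norms simultaneously.
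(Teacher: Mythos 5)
Your proposal is correct and follows the route the paper intends (the corollary is left without an explicit proof, being the combination of Theorem~\ref{mainc}, Remark~\ref{funkt}, the completeness of the metric ultraproduct, and Remark~\ref{red}): apply Theorem~\ref{mainc} to the countably generated dense $*$-subalgebra, pass to trace-reductions to obtain type I$_{\leq k}$ algebras, extend by continuity using completeness, and deduce injectivity from the trace-reducedness of $(A,\tau)$. The details you supply — in particular the care with the mixed-exponent H\"older/Minkowski bounds when transporting the algebra operations through the limit — are exactly what is needed.
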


In order to apply the preceding corollary to finite von Neumann algebras with a separable pre-dual, we should convince ourselves that a finite von Neumann algebra with a separable pre-dual admits indeed a countably generated dense subalgebra with respect to the topology induced by all the $p$-norms. 

\begin{proposition} Let $(M,\tau)$ be a finite von Neumann algebra with a separable pre-dual and a specified trace $\tau \colon M \to \C$. Then, there exists a countably generated ultra-weakly dense $\C$-subalgebra
$A \subset M$, and for any such algebra $(A,\tau|_A)$, the completion of $(A,\tau|_p)$ with respect to the $p$-norms is canonically contained in the GNS Hilbert space $L^2(M,\tau)$ and contains $M$ as the sub-algebra of bounded elements.
\end{proposition}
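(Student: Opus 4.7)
The plan is to handle the two assertions in order. For the existence of a countably generated ultra-weakly dense subalgebra, I would observe that separability of $M_*$ makes the unit ball of $M$ metrizable and separable in the ultra-weak topology; any countable ultra-weakly dense subset $S \subset M$ then generates a unital $*$-subalgebra $A$ with the required properties.

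Now fix any such $A \subset M$, and let $\hat A$ denote its completion in the topology induced by all even $p$-norms. Two basic inequalities drive the argument: $\|a\|_p \leq \|a\|_\infty$ for $a \in M$ and every even $p$, and $p \mapsto \|a\|_p$ is non-decreasing on even $p \geq 2$ (Jensen's inequality applied to the probability trace $\tau$). The latter implies that every sequence in $A$ which is Cauchy in every $p$-norm is Cauchy in $\|\cdot\|_2$; since the GNS map $A \to L^2(M,\tau)$ is a $\|\cdot\|_2$-isometry, it extends by continuity to a linear map $\hat A \to L^2(M,\tau)$, which is injective because $\hat A$ is trace-reduced. For the inclusion $M \subset \hat A$, given $m \in M$, I would apply Kaplansky density (combined with metrizability of the ultra-strong topology on bounded sets, which follows from separability of the pre-dual) to obtain a sequence $(a_n) \subset A$ with $\|a_n\|_\infty \leq \|m\|_\infty$ and $\|a_n - m\|_2 \to 0$. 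The interpolation inequality
\[
\|x\|_p \leq \|x\|_\infty^{1 - 2/p}\,\|x\|_2^{2/p},
\]
applied to $x = a_n - a_k$, together with the uniform bound $\|a_n - a_k\|_\infty \leq 2\|m\|_\infty$, shows that $(a_n)$ is Cauchy in every $p$-norm and therefore represents an element of $\hat A$ whose image in $L^2(M,\tau)$ equals $m$.

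It remains to identify $M$ with the $\tau$-bounded part of $\hat A$. One direction is immediate: $\|m\|_p \leq \|m\|_\infty$ for every $m \in M \subset \hat A$, so $m$ is $\tau$-bounded. Conversely, for $x \in \hat A \subset L^2(M,\tau)$, the $p$-norms extended by continuity from $A$ coincide with the non-commutative $L^p$-norms of $x$; if $\sup_p \|x\|_p < \infty$, then, viewing $x$ via the spectral measure of $|x|$ as a closed operator affiliated with $M$, uniform boundedness of the $L^p$-norms forces the spectral measure to have bounded support, giving $x \in L^\infty(M,\tau) = M$. I expect this final step to be the most delicate point: it requires the theory of operators affiliated with a finite von Neumann algebra and the associated non-commutative $L^p$-spaces in order to translate uniform boundedness of $p$-norms into operator boundedness.
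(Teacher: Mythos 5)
Your proposal is correct, and it reaches the containment $M\subset\hat A$ by a different route than the paper. For that key step the paper first observes that the norm-completion $C^*(A,\tau)$ sits inside the $p$-norm completion (since $\|x\|=\sup_p\|x\|_p$), and then invokes Pedersen's non-commutative Lusin-type theorem: for $x\in M$ there are projections $q_n$ with $\tau(q_n)\geq 1-\frac1n$ and $y_n\in C^*(A,\tau)$ with $\|y_n\|\leq\|x\|$ and $\|q_n(x-y_n)\|\leq\frac1n$, whence $\|x-y_n\|_p\leq\frac1n+\frac1{n^{1/p}}\cdot 2\|x\|$. You instead use Kaplansky density (plus metrizability from the separable pre-dual to get a sequence) together with the interpolation bound $\|x\|_p\leq\|x\|_\infty^{1-2/p}\|x\|_2^{2/p}$; this is if anything more elementary, replacing Pedersen's theorem by a standard density argument, and it produces approximants directly in $A$ rather than in its $C^*$-closure. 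Both arguments exploit the same mechanism, namely a uniform operator-norm bound on the approximating sequence converting $\|\cdot\|_2$-convergence into convergence in every $p$-norm. Your remaining steps match the paper where the paper gives detail: injectivity of $\hat A\to L^2(M,\tau)$ does follow from trace-reducedness of the completion (Corollary 2.13 of the paper shows $\|x\|_2=0$ forces all $\|x\|_p=0$), and your affiliated-operator argument fills in the final identification of $M$ with the $\tau$-bounded elements, which the paper declares ``easy to see''; as you note, that step genuinely requires non-commutative $L^p$-theory (convergence in $L^p$ and in $L^2$ both imply convergence in measure, so the limits agree as $\tau$-measurable operators, and uniformly bounded $L^p$-norms force membership in $M$).
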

\begin{proof}
It is well-known, that if $M$ has a separable pre-dual, then $M$ admits a countable ultra-weakly dense set and hence a countably generated $\C$-subalgebra $A$, which is dense in the ultra-weak topology. Since any Cauchy sequence with respect to the $p$-norms is a Cauchy sequence with respect to the 2-norm, we may identify the completion with a subspace of the GNS space $L^2(A,\tau)$.

We denote the norm-completion of $A$ by $C^*(A,\tau)$. It is clear that any norm-limit is also a limit with respect to the $p$-norms since
$$\|x\| = \sup_{p} \|x\|_p, \quad \forall x \in M.$$ 
Hence, $C^*(A,\tau)$ is contained in the completion with respect to the $p$-norms.
Pedersen's Theorem \cite[Thm. 2.7.3]{ped} states that for every $x \in M$ and $n \in \N$, there exists a projection $q_n \in M$ with $\tau(q_n) \geq 1 - \frac1n$ and $y_n \in C^*(A,\tau)$ with $\|y_n\| \leq \|x\|$, such that $\|q_n(x-y_n)\| \leq \frac1n$.

We conclude from this
$$\|x-y_n\|_p \leq \|q_n(x-y_n)\|_p + \|(1-q_n)(x- y_n)\|_p \leq \frac1n + \|1-q_n\|_p \|x-y_n\| \leq \frac1n + \frac1{n^{1/p}} \cdot 2\|x\|.$$
Here, we used $\|1-q_n\|_p = \tau((1-q_n)^p)^{1/p} \leq \frac1{n^{1/p}}$. In particular, we conclude that  $y_n \to x$ in the topology induced by the $p$-norms. Hence $M$ is contained in the completion and it is easy to see that $M$ consists precisely of those elements which are bounded with respect to the trace.
\end{proof}
\begin{remark} 
We note that a finite von Neumann algebra with a specified trace is not complete with respect to the $p$-norms unless it is finite dimensional.
\end{remark}
\section*{Acknowledgment}

The results in Section \ref{secapp} were obtained in 2007 by the second author in an unsuccessful attempt to prove Connes' embedding conjecture. He wants to thank Eberhard Kirchberg for pointing out the crucial omission of the boundedness issue. The occuring pathologies in GNS-representations with respect to unbounded traces are astonishing and in harsh contrast to the widely experienced automatic regularity of the analytic behavior in the presence of traces -- for example when working with the algebra of unbounded operators affiliated with a finite von Neumann algebra. Sufficient conditions for unbounded traces to lead to a well-behaved representation of a finite von Neumann algebra have been studied for example in \cite{MR722244}.
However, it is well-known in the context of Real Algebraic Geometry that positive traces on the commutative algebra $\C[X_1,X_2] = \C\langle X_1,X_2 \rangle / J_1$ can already be rather pathological and take negative values on polynomials which are strictly positive on the plane $\R^2$ but are not a sums of squares of polynomials. Hence, the GNS-representation of such functionals cannot yield a measure on $\R^2$ and must necessarily have its pathologies. It is also clear that such a trace cannot be the pointwise limit of bounded traces.

Over the years, the insight grew that the results in Section \ref{secapp}, and the consequences which we discussed in this paper, have indeed only very little to do with Connes' original conjecture.

The second author wants to thank Konrad Schm\"udgen for encouragement and helpful discussions about concepts of Non-commutative Real Algebraic Geometry \cite{schmue}.
\begin{bibdiv}
\begin{biblist}

\bib{amitlev1}{article}{
author={Amitsur, A. S.},
author={Levitzki, J.},
title={Minimal identities for algebras},
journal={Proc. Amer. Math. Soc.},
volume={1},
date={1950},
pages={449--463},
issn={0002-9939},
}

\bib{amitlev2}{article}{
author={Amitsur, A. S.},
author={Levitzki, J.},
title={Remarks on minimal identities for algebras},
journal={Proc. Amer. Math. Soc.},
volume={2},
date={1951},
pages={320--327},
issn={0002-9939},
}

\bib{ampro}{article}{
 AUTHOR = {Amitsur, A. S.}
 author= {Procesi, C.},
  TITLE = {Jacobson-rings and {H}ilbert algebras with polynomial
           identities},
JOURNAL = {Ann. Mat. Pura Appl. (4)},
FJOURNAL = {Annali di Matematica Pura ed Applicata. Serie Quarta},
 VOLUME = {71},
   YEAR = {1966},
  PAGES = {61--72},
   ISSN = {0003-4622},
MRCLASS = {16.49},
MRNUMBER = {MR0206044 (34 \#5869)},
MRREVIEWER = {D. S. Rim},
}

\bib{barvinok}{book}{
 AUTHOR = {Barvinok, A.},
  TITLE = {A course in convexity},
 SERIES = {Graduate Studies in Mathematics},
 VOLUME = {54},
PUBLISHER = {American Mathematical Society},
ADDRESS = {Providence, RI},
   YEAR = {2002},
  PAGES = {x+366},
   ISBN = {0-8218-2968-8},
MRCLASS = {52-02 (49N15 52-01 90-02 90C05 90C22 90C25)},
}

\bib{bisgaard}{article}{
 AUTHOR = {Bisgaard, T. M.},
  TITLE = {The topology of finitely open sets is not a vector space
           topology},
JOURNAL = {Arch. Math. (Basel)},
FJOURNAL = {Archiv der Mathematik},
 VOLUME = {60},
   YEAR = {1993},
 NUMBER = {6},
  PAGES = {546--552},
   ISSN = {0003-889X},
  CODEN = {ACVMAL},
MRCLASS = {46A99 (54H99)},
}

\bib{br}{article}{
 AUTHOR = {Braun, A.},
  TITLE = {The radical in a finitely generated {P}.{I}. algebra},
JOURNAL = {Bull. Amer. Math. Soc. (N.S.)},
FJOURNAL = {American Mathematical Society. Bulletin. New Series},
 VOLUME = {7},
   YEAR = {1982},
 NUMBER = {2},
  PAGES = {385--386},
   ISSN = {0273-0979},
  CODEN = {BAMOAD},
MRCLASS = {16A38 (16A21)},
}

\bib{connes1}{article}{
 author={Connes, A.},
 title={Classification of injective factors. Cases $II\sb{1},$
 $II\sb{\infty },$ $III\sb{\lambda },$ $\lambda \not=1$},
 journal={Ann. of Math. (2)},
 volume={104},
 date={1976},
 number={1},
 pages={73--115},
 issn={0003-486X},
}

\bib{fackkos}{article}{
author={Fack, T.},
author={Kosaki, H.},
title={Generalized $s$-numbers of $\tau$-measurable operators},
journal={Pacific J. Math.},
volume={123},
date={1986},
number={2},
pages={269--300},
issn={0030-8730},
}

\bib{hadwin}{article}{
 author={Hadwin, D.},
 title={A noncommutative moment problem},
 journal={Proc. Amer. Math. Soc.},
 volume={129},
 date={2001},
 number={6},
 pages={1785--1791 (electronic)},
 issn={0002-9939},
}

\bib{jac}{book}{
  AUTHOR={Jacobson, N.}
  TITLE = {{${\rm PI}$}-algebras},
 SERIES = {Lecture Notes in Mathematics, Vol. 441},
   NOTE = {An introduction},
PUBLISHER = {Springer-Verlag},
ADDRESS = {Berlin},
   YEAR = {1975},
  PAGES = {iv+115},
MRCLASS = {16A38},
MRNUMBER = {MR0369421 (51 \#5654)},
MRREVIEWER = {Edward Formanek},
}

\bib{klepsch}{article}{
 author={Klep, I.},
 author={Schweighofer, M.},
 title={Connes' embedding conjecture and sums of Hermitian squares},
 journal={Adv. Math.},
 volume={217},
 date={2008},
 number={4},
 pages={1816--1837},
 issn={0001-8708},
}

\bib{mvn1}{article}{
 author={Murray, F. J.},
 author={von Neumann, J.},
 title={On rings of operators},
 journal={Ann. of Math. (2)},
 volume={37},
 date={1936},
 number={1},
 pages={116--229},
 issn={0003-486X},
}

\bib{mvn2}{article}{
 author={Murray, F. J.},
 author={von Neumann, J.},
 title={On rings of operators. II},
 journal={Trans. Amer. Math. Soc.},
 volume={41},
 date={1937},
 number={2},
 pages={208--248},
 issn={0002-9947},
}
		
\bib{mvn3}{article}{
 author={Murray, F. J.},
 author={von Neumann, J.},
 title={On rings of operators. IV},
 journal={Ann. of Math. (2)},
 volume={44},
 date={1943},
 pages={716--808},
 issn={0003-486X},
}

\bib{pa}{article}{
  AUTHOR = {Paschke, W. L.},
   TITLE = {{$L\sp 2$}-homology over traced {$\ast$}-algebras},
 JOURNAL = {Trans. Amer. Math. Soc.},
FJOURNAL = {Transactions of the American Mathematical Society},
  VOLUME = {349},
    YEAR = {1997},
  NUMBER = {6},
   PAGES = {2229--2251},
    ISSN = {0002-9947},
   CODEN = {TAMTAM},
 MRCLASS = {46M20 (46K10)},
MRNUMBER = {MR1407708 (97j:46080)},
MRREVIEWER = {Gustavo Corach},
}

\bib{ped}{book}{
 author={Pedersen, G. K.},
 title={$C^{\ast} $-algebras and their automorphism groups},
 series={London Mathematical Society Monographs},
 volume={14},
 publisher={Academic Press Inc. [Harcourt Brace Jovanovich Publishers]},
 place={London},
 date={1979},
 pages={ix+416},
 isbn={0-12-549450-5},
}

\bib{pos}{article}{
 AUTHOR = {Posner, E. C.},
  TITLE = {Prime rings satisfying a polynomial identity},
JOURNAL = {Proc. Amer. Math. Soc.},
FJOURNAL = {Proceedings of the American Mathematical Society},
 VOLUME = {11},
   YEAR = {1960},
  PAGES = {180--183},
   ISSN = {0002-9939},
MRCLASS = {16.00},
}

\bib{row}{book}{
 AUTHOR = {Rowen, L. H.},
  TITLE = {Polynomial identities in ring theory},
 SERIES = {Pure and Applied Mathematics},
 VOLUME = {84},
PUBLISHER = {Academic Press Inc.},
ADDRESS = {New York},
   YEAR = {1980},
  PAGES = {xx+365},
   ISBN = {0-12-599850-3},
MRCLASS = {16A38},
}

\bib{radulescu}{article}{
 author={R{\u{a}}dulescu, F.},
 title={Convex sets associated with von Neumann algebras and Connes'
 approximate embedding problem},
 journal={Math. Res. Lett.},
 volume={6},
 date={1999},
 number={2},
 pages={229--236},
 issn={1073-2780},
}

\bib{schm1}{article}{
author={Schm{\"u}dgen, K.},
title={Graded and filtrated topological $\sp{\ast}$-algebras. II. The
closure of the positive cone},
journal={Rev. Roumaine Math. Pures Appl.},
volume={29},
date={1984},
number={1},
pages={89--96},
issn={0035-3965},
}

\bib{schm2}{book}{
author={Schm{\"u}dgen, K.},
title={Unbounded operator algebras and representation theory},
series={Operator Theory: Advances and Applications},
volume={37},
publisher={Birkh\"auser Verlag},
place={Basel},
date={1990},
pages={380},
isbn={3-7643-2321-3},
}

\bib{schmue}{article}{
 author={Schm{\"u}dgen, K.},
 title={Noncommutative real algebraic geometry---some basic concepts and
 first ideas},
 conference={
    title={Emerging applications of algebraic geometry},
 },
 book={
    series={IMA Vol. Math. Appl.},
    volume={149},
    publisher={Springer},
    place={New York},
 },
 date={2009},
 pages={325--350},

}

\bib{tak2}{book}{
 author={Takesaki, M.},
 title={Theory of operator algebras. II},
 series={Encyclopaedia of Mathematical Sciences},
 volume={125},
 note={;
 Operator Algebras and Non-commutative Geometry, 6},
 publisher={Springer-Verlag},
 place={Berlin},
 date={2003},
 pages={xxii+518},
 isbn={3-540-42914-X},
}

\bib{MR722244}{article}{
 author={Takesue, K.},
 title={Standard representations induced by positive linear functionals},
 journal={Mem. Fac. Sci. Kyushu Univ. Ser. A},
 volume={37},
 date={1983},
 number={2},
 pages={211--225},
 issn={0373-6385},
}

\end{biblist}
\end{bibdiv}

\end{document}